\setlist[itemize,1]{leftmargin=.15in,parsep=0in}
\setlist[enumerate,1]{leftmargin=.18in,parsep=0in}
\titleformat{\title}[block]{\Large\bfseries\filcenter}{}{1em}{}
\titleformat{\section}[block]{\raggedright\large\bfseries}{\thesection}{1em}{}
\titleformat{\subsection}[block]{\raggedright\normalfont\bfseries}{\thesubsection}{1em}{}
\titleformat{\subsubsection}[runin]{\raggedright\normalfont\itshape}{\thesubsubsection}{1em}{}
\titleformat*{\section}{\LARGE\bfseries}
\titleformat*{\subsection}{\Large\bfseries}
\titleformat*{\subsubsection}{\large\bfseries}
\titleformat*{\paragraph}{\large\bfseries}
\titleformat*{\subparagraph}{\large\bfseries}
\newcommand{\id}{\mathrm{id}}
\newcommand{\Constanta}{C}
\theoremstyle{plain}
\newtheorem{theorem}{Theorem}
\newtheorem{lemma}[theorem]{Lemma}
\newtheorem{proposition}[theorem]{Proposition}
\newtheorem{conjecture}[theorem]{Conjecture}
\newtheorem{corollary}[theorem]{Corollary}
\newtheorem{remark}[theorem]{Remark}
\newtheorem*{theorem*}{Theorem}
\date{}
\title{Breakdown of smooth solutions to the subcritical EPDiff equation}
\author{Martin Bauer, Stephen C. Preston, and Justin Valletta}
\begin{document}
\tabcolsep 1pt

\maketitle

\begin{abstract}
We consider the EPDiff equation on $\mathbb{R}^n$ with the integer-order homogeneous Sobolev inertia operator $A=(-\Delta)^k$. We prove that for arbitrary radial initial data and a sign condition on the initial momentum, the corresponding radial velocity solution has $C^1$ norm that blows up in finite time whenever $0\le k<n/2+1.$  Our approach is to use Lagrangian coordinates to formulate EPDiff as an ODE on a Banach space, enabling us to use a comparison estimate with the Liouville equation. Along the way we derive the Green function in terms of hypergeometric functions and discuss their properties. This is a step toward proving the general conjecture that the EPDiff equation is globally well-posed for any Sobolev inertia operator of any real order $k$ if and only if $k\ge n/2+1$. 
\end{abstract}

\tableofcontents

\section{Introduction}
\paragraph{The EPDiff equation:} We are interested in a family of non-linear PDEs, referred to as the EPDiff equation, which are given by
\begin{equation}\label{eq:EPDiff}
\Omega_t+\nabla_U \Omega+  (\nabla U)^T\Omega+\operatorname{div}(U)\Omega=0,\qquad \Omega=AU,
\end{equation}
where $U:[0,T)\times \mathbb R^n\to \mathbb R^n$ is a time-dependent vector field, $A$ is an $L^2$ symmetric, continuous linear operator, called the \textit{inertia operator}, and $\Omega$ is often regarded as the momentum. This family of equations first arose in the context of the Camassa-Holm equation~\cite{camassa1993integrable}, which corresponds to the order-one Sobolev inertia operator $A=1-\Delta$; see \cite{holm2005momentum,holm1998semidirect,misiolek1998shallow}. More generally, it encompasses many classic one-dimensional fluid models, such as Burgers', the modified Constantin-Lax-Majda~\cite{constantin1985simple,Okamoto_2008}, and the Hunter-Saxton equation~\cite{hunter1991dynamics}. It can thereby be viewed as an $n$-dimensional generalization of these fluid equations. Moreover, EPDiff admits a geometric interpretation as an Euler-Arnold equation~\cite{arnold1998topological}: it can be realized as the geodesic equation of a right-invariant Riemannian metric on the group of diffeomorphisms; such equations are also called Euler-Poincar\'{e} equations, which is how EPDiff acquired its name. The interest in the EPDiff equations is thus further driven by the fundamental role that right-invariant Sobolev metrics on diffeomorphism groups play in template matching and shape analysis~\cite{dryden2016statistical,younes2019shapes,bauer2014overview}, particularly in the LDDMM framework~\cite{beg2005computing}: in the spirit of Grenander's pattern theory~\cite{grenander1996elements} this approach represents the differences between shapes as optimal diffeomorphisms between objects, where optimality is measured precisely with respect to a right-invariant metric on the diffeomorphism group. The EPDiff equation thereby arises as the first-order optimality condition. In this context it is also referred to as the template matching equation~\cite{hirani2001averaged}. \\
\paragraph{Known results on well-posedness and blowup of solutions:}
Since its introduction by Holm and Marsden at the turn of the century \cite{holm2005momentum}, much effort has been dedicated to studying local and global well-posedness of the EPDiff equation. These investigations largely rest on the seminal ideas of Ebin and Marsden \cite{ebin1970groups} who famously used Arnold's geometric framework \cite{arnold1966} to prove local well-posedness of the incompressible Euler equations. Using similar methods, the local well-posedness of the EPDiff equation has been established in a quite general setting: assuming that the inertia operator is a pseudo-differential operator of order $2k$, local well-posedness has been shown for $k\ge\frac12$, regardless of the dimension $n$; see~\cite{gay2009well,misiolek2010fredholm,escher2014right,bauer2015local,bauer2020well,trouve2005local}. Under stronger assumptions on the order $k$, namely when $k>n/2+1$, then global well-posedness of EPDiff has been shown; see e.g., the work of Escher, Kolev, Michor, Mumford, and others~\cite{bruveris2017completeness,escher2014geodesic,mumford2013euler,misiolek2010fredholm,bauer2015local,bauer2025regularity,ebin1970groups}. For $n=1$ the global existence has been extended to $k=\frac{3}{2}=\frac{n}2+1$, see~\cite{preston2018euler,bauer2020geodesic}.

This raises the question on the existence of breakdown of solutions for EPDiff with an inertia operator of lower order (below the critical index $n/2+1$).   Due to the aforementioned connections to one-dimensional fluid models this has been studied in detail in dimension one for the inertia operator $A=(\sigma-\Delta)^k$ with $\sigma\in\{0,1\}$ and $k\in\{0,1\}$: when $k=0$, then the one-dimensional EPDiff equation reduces to the inviscid Burgers' equation, for which breakdown is well-known; for $\sigma=0$ and $k=1/2$ this corresponds to the modified Constantin-Lax-Majda for which blowup is known~\cite{castro2010infinite,bauer2016geometric,preston2018euler}; for $k=1$ and $\sigma=0$, this corresponds to the Hunter-Saxton equation, for which one can obtain an explicit solution formula \cite{lenells2007hunter,bauer2014homogenous} that leads to a direct proof of breakdown; see also~\cite{yin2004structure}; finally, for $k=1$ and $\sigma=1$, this gives the Camassa-Holm equation, for which breakdown is long known~\cite{camassa1993integrable,constantin1998wave}; see also the work of McKean~\cite{mckean2015breakdown}, who obtained the complete picture of the breakdown mechanism for this equation. Combining these blowup results with the previously discussed global existence results, one obtains a complete characterization of global well-posedness (existence of blowup, resp.) for EPDiff with integer-order inertia operator in dimension one.

In higher dimensions, the investigation of solution breakdown for the EPDiff equation was only explored much more recently by Chae and Liu~\cite{chae2012blow}, where they confirmed breakdown for the higher-dimensional Burgers' equation. Shortly after, breakdown for the higher-dimensional Camassa-Holm equation, corresponding to $k=1$, was established by Li, Yu, and Zhai \cite{li2013euler}. In their work they showed that the one-dimensional breakdown mechanism can be adapted to radial solutions in higher dimensions. Here, we emphasize that both of these results have in common, that already the one-dimensional equation admits solutions that break down. In our recent work~\cite{bauer2024liouville} we have shown that the EPDiff equation corresponding to $A=(\sigma-\Delta)^2$ in dimension $n\geq 3$ breaks down in finite time, in both the homogeneous $(\sigma=0)$ and the nonhomogeneous $(\sigma=1)$ cases. This was the first instance where the breakdown is a purely higher-dimensional phenomenon, i.e., the one-dimensional equation for $k=2$ exists globally in time. 

Summarizing the previous paragraphs, the known cases here are: global well-posedness for any real $k>n/2+1$ and any $n$ when $\sigma>0$ and for $k=3/2$ when $n=1$. Solution breakdown has been obtained for any $\sigma\ge 0$ for $k\in\{0,1\}$ and any $n$; for $k=2$ and $n\ge 3$; and $k=\tfrac{1}{2}$ when $n=1$. As a consequence of these results it has been conjectured that the index $k=\frac{n}2+1$ is indeed critical for this property, i.e.: 
\begin{conjecture}\label{conjecture}
    Let $A$ be an elliptic pseudo differential operator of order $2k$ acting on $C^{\infty}(\mathbb R^n,\mathbb R^n)$, that is symmetric and positive w.r.t. the $L^2$-inner product. Then the corresponding EPDiff equation is globally well-posed for all smooth initial data $u_0$ if and only if $k\ge n/2+1$.
\end{conjecture}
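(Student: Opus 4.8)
The ``if'' direction of Conjecture~\ref{conjecture} is already settled by the global well-posedness theory cited above (the Ebin--Marsden--type arguments for $k>n/2+1$, together with the endpoint $k=n/2+1$ in dimension one), so the entire difficulty lies in the ``only if'' direction: exhibiting finite-time breakdown whenever $k<n/2+1$. I would establish this in the representative integer-order homogeneous model $A=(-\Delta)^{k}$ and reduce to radial data, following the three-part scheme advertised in the abstract --- (i) reformulate EPDiff as an ODE on a Banach space in Lagrangian coordinates; (ii) derive and analyze the vector Green function of $(-\Delta)^{k}$ and its radial reduction, which is where hypergeometric functions enter; (iii) extract a Liouville-type differential inequality along a characteristic and conclude by ODE comparison.

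\emph{Step 1: Lagrangian ODE and radial reduction.} Let $\eta(t,\cdot)$ be the flow of $U$. Equation~\eqref{eq:EPDiff} is equivalent to the statement that the momentum one-form density is transported by $\eta$, i.e. $(D\eta)^{T}\,(\Omega\circ\eta)\,\det D\eta=\Omega_{0}$; hence $\Omega(t,\cdot)$ is determined algebraically by $\eta$ and the fixed datum $\Omega_{0}$, and $U=A^{-1}\Omega=K*\Omega$ recovers the velocity. Substituting this into $\partial_{t}\eta=U(t,\eta)$ and changing variables in the convolution turns EPDiff into a closed first-order ODE $\dot{\eta}=\mathcal{F}(\eta)$ on a suitable Banach space of diffeomorphisms, set up so as to inherit local well-posedness from the known theory. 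Imposing the radial ansatz $\eta(t,x)=\tfrac{\gamma(t,r)}{r}\,x$ with $\Omega_{0}=\omega_{0}(r)\tfrac{x}{r}$, the transport law collapses to the scalar identity $\omega\bigl(t,\gamma(t,r)\bigr)=\omega_{0}(r)\,\gamma_{r}^{-2}\,(\gamma/r)^{1-n}$, and the dynamics reduces to a scalar equation $\gamma_{tt}(t,r)=(U_{t}+\nabla_{U}U)\bigl(t,\gamma(t,r)\bigr)\cdot\tfrac{x}{r}$ with $\partial_{t}\gamma=u(t,\gamma)$, where $u$ is the radial velocity profile.

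\emph{Step 2: the Green function.} Since $(-\Delta)^{k}$ acts componentwise, its vector Green function is a constant multiple of $|x|^{2k-n}$ times the identity (with the standard logarithmic correction when $2k-n$ is a nonnegative even integer, a case treated separately), so the radial velocity profile is $u(t,r)=\int_{0}^{\infty}G_{n}(r,s)\,\omega(t,s)\,s^{n-1}\,ds$ for a kernel $G_{n}$ obtained by integrating $|x-y|^{2k-n}\,\tfrac{x\cdot y}{|x|\,|y|}$ over the sphere $|y|=s$. This is exactly where the Gauss hypergeometric function appears: $G_{n}(r,s)$ is a piecewise expression in ${}_{2}F_{1}$, and the proof needs its positivity, the sign of $\partial_{r}G_{n}$, and precise asymptotics as $s\to r$, $s\to 0$, and $s\to\infty$. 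I expect this to be the main obstacle: everything downstream is bookkeeping once the qualitative behavior of $G_{n}$ is pinned down, and it is here that the range $0\le k<n/2+1$ must be recognized as sharp, since it governs the exponents (and the integrability of $|r-s|^{2k-n}$ across the diagonal) that decide whether the resulting differential inequality is of blowup type. The endpoint $k=0$ (inviscid Burgers, no smoothing of $\Omega$) falls outside this scheme and would be dispatched by the classical characteristic argument.

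\emph{Step 3: Liouville comparison and $C^{1}$ blowup.} Fix a label $r_{0}$ and track an auxiliary scalar $p(t)$ built from $\gamma_{r}(t,r_{0})$ and $\gamma(t,r_{0})/r_{0}$ --- e.g. $p=-\log\gamma_{r}(t,r_{0})$. Combining Step~2 with the transport law $\omega=\omega_{0}\gamma_{r}^{-2}(\gamma/r)^{1-n}$ and the sign hypothesis on $\omega_{0}$, the term $(U_{t}+\nabla_{U}U)$ acquires a definite sign and a one-sided bound in terms of negative powers of $\gamma_{r}$ and $\gamma/r$; substituting this into $\gamma_{tt}$ and estimating the $r$-integral by its contribution near $r_{0}$ yields a closed differential inequality of Liouville type, $\ddot{p}\ge c\,e^{\lambda p}$ with $c,\lambda>0$, whose comparison principle forces $p(t)\to\infty$ at some finite time $T_{*}$. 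Since $\partial_{t}\log\gamma_{r}(t,r_{0})=u_{r}\bigl(t,\gamma(t,r_{0})\bigr)$ along the characteristic, it follows that $\int_{0}^{T_{*}}\|u_{r}(t,\cdot)\|_{\infty}\,dt=\infty$, so $t\mapsto\|U(t,\cdot)\|_{C^{1}}$ cannot remain bounded on $[0,T_{*})$ --- that is, the radial velocity blows up in the $C^{1}$ norm at $T_{*}$, as claimed.
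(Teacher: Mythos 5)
The statement you are addressing is a \emph{conjecture}: the paper offers no proof of it, and explicitly says it remains open in most cases; its actual contribution (Theorem~\ref{theorem:maintheorem}) is only the special case $A=(-\Delta)^k$ with integer $0\le k<n/2+1$, presented as evidence. Your proposal does not close that gap. For the ``only if'' direction, establishing breakdown for one ``representative'' operator $(-\Delta)^k$ says nothing about an arbitrary elliptic, $L^2$-symmetric, positive pseudodifferential operator of order $2k$, which is what the conjecture quantifies over; it also leaves out non-integer $k$ (the paper points out that for $\tfrac12\le k<1$ its own comparison function fails and a different weight $Q$ is needed, and that $k<\tfrac12$ lies outside the Banach-space ODE framework altogether) and the non-homogeneous operators $(\sigma-\Delta)^k$. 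For the ``if'' direction, your claim that it is ``already settled'' is wrong: global well-posedness is known for $k>n/2+1$, but the critical case $k=n/2+1$ is known only in dimension one; the paper lists it as open in higher dimensions (Section~\ref{sec:future}), and for general operators of critical order even less is known. So at best your sketch aims at the paper's partial result, not at the conjecture.

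Even as a sketch of that partial result, Step~3 has a substantive flaw. You localize at a single label $r_0$ and claim a scalar inequality $\ddot p\ge c\,e^{\lambda p}$ by ``estimating the $r$-integral by its contribution near $r_0$.'' The paper's mechanism is intrinsically non-local: after conjugating by the weight $Q$ it obtains \eqref{blowupinequality}, a first-order differential inequality for $\ln q(t,r)$ whose right-hand side is $-C\int_r^\infty |z_0(s)|/(Q(s)\,q(t,s))\,ds$, i.e.\ an inequality on a Banach space of positive functions, compared against the (infinite-dimensional) Liouville system rather than a pointwise second-order ODE. As the paper emphasizes in Section~\ref{sec:future}, the collapse of $\gamma_r$ at the origin is driven by momentum at arbitrarily large radii, so truncating the integral near $r_0$ discards precisely the part that forces blowup, and the inequality cannot be closed in terms of a single scalar $p(t)$ because it involves $q(t,s)$ for all $s\ge r$. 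Moreover, the kernel estimates you defer as ``bookkeeping'' are the analogue of conditions \eqref{condition1Hk}--\eqref{condition2Hk}; the paper proves them via the explicit hypergeometric representation of Proposition~\ref{greens} (derived by induction using \eqref{type1deriv}--\eqref{type3deriv}, Gauss's theorem, and a contiguity relation) together with the monotonicity and lower bounds of Lemma~\ref{boundslemma}, and the sharpness of the range $k<n/2+1$ enters through the signs of the hypergeometric parameters, not through integrability of $|r-s|^{2k-n}$ across the diagonal as you suggest. Finally, your convolution ansatz must be implemented in the decay classes $\mathcal{Q}^m_\lambda$ so that the inverse you use is the one compatible with the conservation law \eqref{eqn:conservationlaw}; this is where the paper's functional setting does real work.
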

 Note that the particular case $A=(\sigma-\Delta)^k$ with $k< \frac{n}{2}+1$ in the above conjecture remains open in most cases. The objective of the present paper is to add further evidence for this conjecture by proving new solution breakdown results.\\

\paragraph{Main Contributions:}
The main result of the present article is the following theorem, which establishes blowup for the homogenous inertia operator $A=(-\Delta)^k$ for any integer $k$ below the critical threshold:
\begin{theorem*}[Theorem~\ref{theorem:maintheorem} in Section~\ref{breakdown}]
    Let $k$ be an integer with $0\le k<n/2+1$. Then there is radial initial data $u_0\in H^\infty(\mathbb{R}^n,\mathbb{R}^n)$ such that the corresponding radial solution to the $n$-dimensional EPDiff equation \eqref{eq:EPDiff} with the homogeneous Sobolev inertia operator $A=(-\Delta)^k$ has $C^1$ norm that blows up in finite time. 
\end{theorem*}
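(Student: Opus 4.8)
The plan is to pass to Lagrangian coordinates and reduce the radial EPDiff equation to an ODE on a Banach space, then run a comparison argument against the Liouville equation, as advertised in the abstract. Concretely, write the radial velocity as $U(t,x) = u(t,r)\,x/r$ with $r=|x|$, and let $\eta(t,r)$ denote the radial part of the flow map, so that $\eta_t(t,r) = u(t,\eta(t,r))$. The momentum conservation law \eqref{eq:EPDiff} pushed forward along the flow becomes an equation for the pulled-back momentum density that can be integrated explicitly: the quantity $\Omega$ transported by the flow, together with the cocycle factors $(\nabla U)^T$ and $\operatorname{div}(U)$, combine so that the radial momentum along a Lagrangian trajectory evolves multiplicatively. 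One then inverts the inertia operator $A=(-\Delta)^k$ using the explicitly computed Green function (in terms of hypergeometric functions, as the excerpt promises) to recover $u$, and hence $\eta_{tt}$, as an integral operator applied to the transported momentum.

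The key steps, in order, are: (1) establish that for radial data the operator $A=(-\Delta)^k$ preserves the radial ansatz and write down its radial Green function $G(r,s)$, recording the sign and decay properties needed later — in particular that $G\ge 0$ and the relevant kernel is integrable precisely when $k<n/2+1$, which is where the subcritical hypothesis enters; (2) derive the Lagrangian ODE $\eta_{tt}(t,r) = F(\eta(t,\cdot))(r)$ with $F$ built from $G$ and the conserved momentum, and verify it is locally Lipschitz on an appropriate Banach space (e.g.\ a weighted $C^1$ space of flow maps), giving local existence and the fact that $C^1$ blowup of $u$ is equivalent to $\eta$ leaving the space of diffeomorphisms, i.e.\ $\eta_r\to 0$ or $\eta_r\to\infty$ along some sequence; (3) choose radial initial data $u_0$ with the sign condition on $\Omega_0=Au_0$ so that, evaluating the ODE at a well-chosen Lagrangian label $r_0$ (likely $r_0$ near the support or a point where the momentum is concentrated), one gets a scalar differential inequality of the form $\ddot q \le -c\, q^{-p}$ or $\ddot q \ge$ (something forcing $q=\eta_r(t,r_0)$ or a related quantity) to hit $0$ in finite time; (4) compare this scalar inequality with the Liouville-type equation $\ddot q = q^{-p}$ (whose solutions are explicit or at least known to reach the boundary in finite time) to conclude finite-time breakdown, and finally translate back to a statement about $\|U(t)\|_{C^1}$.

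The main obstacle I expect is Step (1)–(3) interface: extracting from the integral operator $F$ a clean, \emph{closed} scalar differential inequality. The Green function for $(-\Delta)^k$ on $\mathbb{R}^n$ is an explicit hypergeometric expression but not elementary, so one must control the sign of $F(\eta)(r_0)$ and bound it below (in absolute value) by a negative power of $\eta_r(r_0)$ or $\eta(r_0)/r_0$ uniformly — this requires monotonicity/convexity properties of $G$ and careful use of the sign condition on the initial momentum to ensure no cancellation in the integral. A secondary difficulty is bookkeeping the Jacobian factors: in dimension $n$ the radial divergence is $\operatorname{div}(U) = u_r + (n-1)u/r$, so the transported momentum picks up factors of $\eta^{n-1}\eta_r$ and $(\eta/r)^{n-1}$, and one must check these don't destroy the desired sign or the integrability of the kernel. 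Once the scalar inequality is in hand, the comparison with Liouville is routine. I would also need to confirm that the chosen $u_0$ genuinely lies in $H^\infty$ and that the sign condition on $\Omega_0$ is compatible with $\Omega_0 = (-\Delta)^k u_0$ for such data — constructing $u_0$ via its momentum (choose $\Omega_0$ with the right sign and compactly supported-ish, then set $u_0 = A^{-1}\Omega_0$) is the cleanest route, modulo checking decay.
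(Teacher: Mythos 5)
Your overall frame (Lagrangian reformulation via the conserved momentum, explicit hypergeometric Green function for $(-\Delta)^k$, comparison with a Liouville-type equation) matches the paper's strategy, but there are two genuine gaps. First, a structural one: your step (3) aims to close the argument by evaluating at a single well-chosen label $r_0$ and extracting a scalar second-order inequality $\ddot q\le -c\,q^{-p}$. That is not how the mechanism works, and it is unlikely to: the conservation law \eqref{eqn:conservationlaw} reduces the dynamics to the \emph{first-order} system \eqref{gamma1}--\eqref{rho1}, and the inequality one actually obtains for the weighted Jacobian $q(t,r)=Q(\gamma(t,r))\rho(t,r)/Q(r)$ is first-order in time and \emph{nonlocal} in $r$ — it involves $q(t,s)$ for all $s\ge r$, as in \eqref{blowupinequality} — because the collapse of $\gamma_r$ near the origin is driven by momentum located at arbitrarily large $r$. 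Cancellation-free sign control at a single label does not close the system; the comparison must be run against the nonlocal Liouville equation on the Banach space $\mathcal{P}$ of positive continuous functions, not against a scalar ODE. (Also, only $\gamma_r\to 0$ is the relevant breakdown criterion here, per Proposition~\ref{breakdownmech}; $\gamma_r\to\infty$ plays no role.)

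Second, the step you flag as ``the main obstacle'' is in fact the mathematical heart of the proof, and your proposal leaves it unresolved. The paper's Theorem~\ref{generalbreakdown} requires exhibiting a weight $Q$ and constant $C>0$ satisfying the two kernel inequalities \eqref{hypothesis1}--\eqref{hypothesis2}; the key idea is the specific choice $Q(r)=1/\big(r\,\varphi_k(0,r)\big)\propto r^{n+1-2k}$ (Remark~\ref{canonicalQ}), after which the two inequalities reduce to statements about the hypergeometric factor $F(1-k,\tfrac n2+1-k;\tfrac n2+1;r^2/s^2)$: the first follows from monotonicity in $z$ via Euler's integral (inequality \eqref{decreasinghypergeom}, using $a=1-k\le 0$), and the second from the derivative identity \eqref{type3deriv} together with the uniform positive lower bound \eqref{hypergeombounds} coming from Gauss's evaluation at $z=1$ (using $b=\tfrac n2+1-k>0$). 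This is precisely where the subcritical hypothesis $1\le k<\tfrac n2+1$ enters — not, as you suggest, through integrability of the kernel. Your recipe for the data (pick $\omega_0\le 0$ and set $u_0=A^{-1}\omega_0$) is consistent with the paper, but without the weight $Q$ and the hypergeometric sign/lower-bound lemmas, the argument does not go through.
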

This result will be established using a refinement of the approach developed in our previous paper~\cite{bauer2024liouville}, see Theorem~\ref{generalbreakdown}.
In this theorem, 
we derive conditions on the Green function $\delta$ of the solution to $A(u(r)\partial_r)=\omega(r)\partial_r$ that imply $C^1$ blow-up of the radial velocity field $u$. Note that this theorem is a modified version of Theorem B in  \cite{bauer2024liouville} with hypotheses that are less restrictive and at the same time easier to verify. In the same part, Section~\ref{sec:dimensionblowup}, we also present a second breakdown criteria for solutions to EPDiff based on reducing the dimension to the smallest dimension in which breakdown occurs. This provides, for example, simple proofs of breakdown to the higher-dimensional Burgers', Hunter-Saxton, and modified Constantin-Lax-Majda equations; see Corollary~\ref{burgersCH}. Lastly, Section~\ref{breakdown} is devoted to the proof of our main result stated previously, namely the breakdown of solutions to EPDiff with the homogeneous Sobolev inertia operator. For this we will need several results on hypergeometric functions and an expression for the Green function of the operator $A=(-\Delta)^k$.\\ 
\paragraph{Future Directions:} Building on the findings of this paper, two immediate directions for future research emerge: extensions of the breakdown results to non-integer $k$ on the one hand and to the non-homogenous inertia operator $A=(1-\Delta)^k$ on the other hand. These endeavors come with a new set of difficulties on which we will briefly comment in Section~\ref{sec:future}. \\

\paragraph{Acknowledgements and Data availability statement:}  MB and JV were partially funded by BSF grant 2022076.
MB was partially funded by NSF grant CISE 2426549. Data sharing not applicable to this article as no datasets were generated or analysed during the current study.

\section{Background}\label{background}
In this section, we describe the essential background material for the results of this article. In particular, we  recall the setting in which the EPDiff equation is an Euler-Arnold equation, along with the conservation law associated with this equation.

\subsection{The EPDiff equation as an Euler-Arnold equation}\label{EPDiffEA}

Following the presentation in \cite{bauer2015local}, we recall the setting in which the EPDiff equation is an Euler-Arnold equation on an appropriate diffeomorphism group. We consider the space of diffeomorphisms that differ from the identity by a smooth Sobolev function, namely
\[\operatorname{Diff}(\mathbb{R}^n):=\{\id+f\,|\,f\in H^\infty(\mathbb{R}^n,\mathbb{R}^n)\,\text{and}\,\det(\id+df)>0\},\]
where
\[H^\infty(\mathbb{R}^n,\mathbb{R}^n)=\bigcap _{q\ge0}H^q(\mathbb{R}^n,\mathbb{R}^n).\]
The space $\operatorname{Diff}(\mathbb{R}^n)$ is  a regular Fr\'{e}chet Lie group with its Lie algebra being the space of $H^\infty$ vector fields, which we identify with $H^\infty(\mathbb{R}^n,\mathbb{R}^n)$, i.e. $T_e\operatorname{Diff}(\mathbb{R}^n)=\mathfrak{X}_{H^\infty}(\mathbb{R}^n)\cong H^\infty(\mathbb{R}^n,\mathbb{R}^n)$ \cite{hermas2010existence,michor2013zoo}. To define a right-invariant Riemannian metric on the diffeomorphism group, it suffices to prescribe an inner product on the Lie algebra $H^\infty(\mathbb{R}^n,\mathbb{R}^n)$. For this we introduce an $L^2$-symmetric, positive-definite linear operator $A:H^\infty(\mathbb{R}^n,\mathbb{R}^n)\to H^\infty(\mathbb{R}^n,\mathbb{R}^n)$, called the \textit{inertia operator}, which induces an inner product given at the identity by
\[\langle U_1,U_2 \rangle_{A}:=\int_{\mathbb{R}^n}(AU_1\cdot U_2)\,dx,\qquad U_1,U_2\in H^\infty(\mathbb{R}^n,\mathbb{R}^n).\]
This is then extended to a right-invariant Riemannian metric on the diffeomorphism group via right translations:
\[g^A_\eta(u,v):=\langle u\circ\eta^{-1},v\circ\eta^{-1} \rangle_{A},\qquad u,v\in T_{\eta}\operatorname{Diff}(\mathbb{R}^n).\]
One may now define the kinetic energy of a path of diffeomorphisms $\eta:[0,1]\to\operatorname{Diff}(\mathbb{R}^n)$ via 
\[E(\eta)=\frac{1}{2}\int_0^1g^A_\eta(\dot{\eta},\dot{\eta})\,dt.\]

Geodesics with respect to the Riemannian metric $g_\eta^A$ are critical points of the kinetic energy $E(\eta).$ For a right-invariant metric on a Lie group, it is convenient to introduce the Eulerian velocity $U(t,x):=\partial_t\eta(t,\eta^{-1}(t,x))\in H^\infty(\mathbb{R}^n,\mathbb{R}^n).$ Using this change of coordinates, the geodesic equation takes the form 
\begin{equation}\label{EA}
\partial_t\eta(t,x)=U(t,\eta(t,x)),\qquad \partial_tU(t,x)+\operatorname{ad}_{U(t,x)}^\top U(t,x)=0,
\end{equation}
where $\operatorname{ad}_U^T$ is the formal adjoint of the operator $\operatorname{ad}_U$ with respect to the inner product $\langle .\,,.\rangle_{H^k}$. The first order equation in $U$ on the Lie algebra is called the \textit{Euler-Arnold equation}. It was first derived for finite-dimensional Lie groups by Poincar\'{e} \cite{poincare1901forme} (hence the name EPDiff, for Euler-Poincar\'{e} equation on the diffeomorphism group), and was subsequently extended by Arnold \cite{arnold1966} to the infinite-dimensional setting. Following this approach, one can show that the EPDiff equation \eqref{eq:EPDiff} is the Euler-Arnold equation on $\operatorname{Diff}(\mathbb{R}^n)$ with respect to the right-invariant metric induced by the inertia operator $A$; see also \cite{misiolek2010fredholm,bauer2015local}. If $\eta$ is the Lagrangian flow of the vector field $U$, then
\[\frac{d}{dt}\left(\operatorname{Ad}^\top_\eta U\right)=\operatorname{Ad}^\top_\eta\left( U_t+\operatorname{ad}^\top_UU\right),\]
from which one can see that the Euler-Arnold equation \eqref{EA} implies the momentum conservation law
\begin{equation}\label{conservation}
\operatorname{Ad}^\top_{\eta(t)}U(t)=U_0,
\end{equation}
where $U(0)=U_0$. This allows us to eliminate $U(t)$ in \eqref{EA} to get an equation directly on the diffeomorphism group via
\begin{equation}\label{reduction}
\frac{d\eta}{dt}=U(t)\circ\eta(t)=\operatorname{Ad}^\top_{\eta(t)^{-1}}U_0\circ\eta(t),\qquad \eta(0)=\id.
\end{equation}

If $A$ is a sufficiently strong differential operator, then the right side of \eqref{reduction} is smooth as a function of $\eta$ in the Sobolev space $H^s$ for sufficiently large $s$, and we can thereby prove local well-posedness using Picard iteration for any fixed $U_0\in H^s.$ When applied to the Euler equations for a perfect fluid (i.e. on the volume-preserving diffeomorphism group), this technique is called the \textit{particle-trajectoy method} in Majda-Bertozzi \cite{majda_bertozzi_2001}; see also \cite{ebin1984concise,bauer2016geometric}. We shall apply this method to study global existence in the special case when $U_0$ is a smooth, purely radial vector field $U_0=u_0(r)\partial_r$. We first check that radial initial data remains radial during the time evolution of the EPDiff equation. Then we state the conservation law associated with the radial EPDiff equation \eqref{mainomega}.

\begin{lemma}[Radial solutions]\label{lem:radial_solutions}
Let $U_0$ be a purely radial vector field and suppose the inertia operator $A$ preserves radial vector fields. If $U_0=u_0(r)\partial_r$ is the initial velocity for a solution $U(t,r)$ of the EPDiff equation \eqref{eq:EPDiff} defined on its maximal interval of existence $J$, then $U=u(t,r)\partial_r$ is a radial velocity field for each $t\in J.$ Moreover, the radial function $u$ satisfies the radial EPDiff equation 
\begin{equation}\label{mainomega}
\omega_t + u \omega_r + 2u_r \omega + \frac{n-1}{r} u \omega = 0,\qquad AU=\omega\partial_r.
\end{equation}
\end{lemma}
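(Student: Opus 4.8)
The plan is to establish the two claims—persistence of radial symmetry and the reduced scalar equation—by exploiting the equivariance of the EPDiff flow under the isometry group $O(n)$, and then by a direct substitution of the radial ansatz into \eqref{eq:EPDiff}.

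\textbf{Persistence of radial symmetry.} First I would observe that $\operatorname{Diff}(\mathbb{R}^n)$ carries a natural right action of $O(n)$: for $Q\in O(n)$ and $\eta\in\operatorname{Diff}(\mathbb{R}^n)$ the composition $Q^{-1}\circ\eta\circ Q$ is again a diffeomorphism differing from the identity by an $H^\infty$ map. The key point is that the right-invariant metric $g^A$ is invariant under this action: since $A$ preserves radial vector fields, and more generally (because $A$ is $L^2$-symmetric, built from the Laplacian or a pseudodifferential operator commuting with rotations) $A$ commutes with the pushforward by $Q$, one checks that $\langle Q_*U_1,Q_*U_2\rangle_A=\langle U_1,U_2\rangle_A$, hence $g^A$ is $O(n)$-invariant and so are its geodesics. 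Therefore, if $\eta(t)$ is the geodesic with $\eta(0)=\id$ and $\dot\eta(0)=U_0$, then for each $Q\in O(n)$ the path $Q^{-1}\circ\eta(t)\circ Q$ is the geodesic with the same initial conditions provided $Q^{-1}\circ U_0\circ Q=U_0$, i.e. provided $U_0$ is radial. By uniqueness of solutions to the ODE \eqref{reduction} on the appropriate Sobolev completion (local well-posedness via Picard iteration, as recalled above), $\eta(t)=Q^{-1}\circ\eta(t)\circ Q$ for all $Q\in O(n)$ and all $t\in J$, which says exactly that $\eta(t)$ is an $O(n)$-equivariant diffeomorphism. Consequently $U(t)=\dot\eta(t)\circ\eta(t)^{-1}$ is an $O(n)$-invariant vector field, and any such vector field on $\mathbb{R}^n$ that is also smooth must be of the form $u(t,r)\partial_r$.

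\textbf{Derivation of the reduced equation.} For the second claim I would substitute $U=u(t,r)\partial_r$ and $\Omega=AU=\omega(t,r)\partial_r$ directly into \eqref{eq:EPDiff} and compute each of the four terms in radial coordinates. Writing everything in terms of $r$ and using $\nabla_U\Omega$, $(\nabla U)^T\Omega$, and $\operatorname{div}(U)$ for a purely radial field: one has $\operatorname{div}(u\,\partial_r)=u_r+\tfrac{n-1}{r}u$, the convective term contributes $u\,\omega_r\,\partial_r$ along the radial direction plus a term from differentiating $\partial_r$ itself, and the $(\nabla U)^T\Omega$ term contributes $u_r\,\omega\,\partial_r$. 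Collecting the radial components and using the fact that the angular components vanish identically by symmetry, the four terms combine to give
\begin{equation*}
\omega_t + u\,\omega_r + 2u_r\,\omega + \frac{n-1}{r}\,u\,\omega = 0,
\end{equation*}
which is \eqref{mainomega}. The bookkeeping here is elementary once one is careful about the Christoffel-type terms coming from the non-constancy of $\partial_r$ in Cartesian coordinates; alternatively one can write $U=\tfrac{u(r)}{r}x$ in Cartesian coordinates and compute $\nabla U$, $(\nabla U)^T$, $\operatorname{div}U$ as honest matrix/scalar expressions, which avoids Christoffel symbols at the cost of slightly messier algebra.

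\textbf{Main obstacle.} The genuinely delicate point is the first part: making rigorous the statement that radial data yield radial solutions. This requires knowing that the flow is well-defined and unique on a suitable Banach space—which we only have at the level of the reduced equation \eqref{reduction} under some regularity hypothesis on $A$—and that the $O(n)$-action is by isometries of $g^A$, which in turn rests on $A$ commuting with rotations. I would therefore state the equivariance-of-$A$ hypothesis carefully (it is implied by "$A$ preserves radial vector fields" together with $O(n)$-naturality, or can simply be assumed as part of the hypothesis), and invoke the uniqueness clause of the local existence result to transfer the symmetry of the initial data to the solution for all $t$ in the maximal interval $J$. The computation in the second part, by contrast, is routine and I would only sketch it.
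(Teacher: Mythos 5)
Your proposal is correct, but it is considerably more elaborate than, and partly different in route from, the paper's argument, which consists of a single line: substitute the radial ansatz $U=u(t,r)\partial_r$ into \eqref{eq:EPDiff} and use that $A$ preserves radial vector fields; the reduction to \eqref{mainomega} is exactly your second step, and persistence of radiality is left implicit (it follows by solving the reduced radial equation and invoking uniqueness of solutions to \eqref{eq:EPDiff}, so that the radial solution so constructed must coincide with $U(t)$ on $J$). Your first step replaces this with an $O(n)$-equivariance argument: the conjugation action $\eta\mapsto Q^{-1}\circ\eta\circ Q$ is by isometries of $g^A$ provided $A$ commutes with pullback by rotations, and uniqueness of geodesics then forces $\eta(t)$, hence $U(t)$, to be $O(n)$-equivariant, hence radial. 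This is a clean and more conceptual route, but note the hypothesis mismatch you yourself flag: ``$A$ preserves radial vector fields'' does not by itself imply that $A$ commutes with the $O(n)$-action (an operator could act radially on radial fields while doing something non-equivariant off the radial subspace), so your symmetry argument needs the stronger equivariance assumption, whereas the substitution-plus-uniqueness route needs only the stated hypothesis (and is what makes the lemma applicable verbatim to any inertia operator satisfying the condition of Remark~\ref{vectorlaplacian}). In the paper's setting ($A$ a function of the vector Laplacian) both hypotheses hold, so nothing is lost; what your approach buys is a transparent explanation of \emph{why} radiality persists, at the cost of a slightly stronger assumption and of invoking the geometric (geodesic) formulation rather than working directly with the PDE.
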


\begin{proof}
Substitute the radial solution $U=u(t,r)\partial_r$ into \eqref{eq:EPDiff} and use the fact that $A$ preserves radial vector fields.
\end{proof}

\begin{remark}\label{vectorlaplacian}
Note that the vector Laplacian acts on radial vector fields by the formula
    \begin{equation}
\Delta\big(u(r)\partial_r\big)=\Big(u''(r)  + \frac{n-1}{r}\, u'(r) - \frac{n-1}{r^2} \, u(r)\Big)\partial_r.
\end{equation}
Consequently  any inertia operator defined in terms of the vector Laplacian $\Delta$ preserves radial vector fields in the sense of Lemma~\ref{lem:radial_solutions}. 
\end{remark}

Next we introduce the radial Lagrangian flow map $\gamma(t,r)$ associated to the component function $u$ of the radial vector field $U=u(t,r)\partial_r$:
\begin{equation}\label{radialflow}
    \frac{\partial\gamma}{\partial t}(t,r)=u(t,\gamma(t,r)),\qquad \gamma(0,r)=r.
\end{equation}
Calculating the group adjoint transpose $\operatorname{Ad}^\top_{\gamma}U$ for radial vector fields $U$ yields via equation \eqref{conservation} a conservation law associated with the radial EPDiff equation. On the diffeomorphism group $\operatorname{Diff}(\mathbb{R}^n)$, this takes the following form.

\begin{proposition}[Conservation law; Lemma 3.6 in \cite{bauer2024liouville}]
\label{conservationlaw}
    Let $U=u(t,r)\partial_r$ with $AU=\omega(t,r)\partial_r$. If $u$ and $\omega$ solve the radial EPDiff equation \eqref{mainomega} on the time interval $[0,T)$ for all $r\ge 0,$ with the flow $\gamma(t,r)$ defined by \eqref{radialflow}, then for all $t\in[0,T)$ and $r\in[0,\infty)$, the following conservation law holds.
    \begin{equation}\label{eqn:conservationlaw}
    \gamma(t,r)^{n-1}\gamma_r(t,r)^2\omega(t,\gamma(t,r))=r^{n-1}\omega_0(r).
    \end{equation}
\end{proposition}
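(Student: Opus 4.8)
The plan is to verify the conservation law \eqref{eqn:conservationlaw} by differentiating its left-hand side along the flow and showing the result vanishes identically. Define
\[
Q(t,r) := \gamma(t,r)^{n-1}\,\gamma_r(t,r)^2\,\omega(t,\gamma(t,r)).
\]
Since $Q(0,r) = r^{n-1}\omega_0(r)$ by the initial conditions $\gamma(0,r)=r$ and $\gamma_r(0,r)=1$, it suffices to prove $\partial_t Q(t,r) = 0$ for all $t \in [0,T)$ and $r \ge 0$. This is the same strategy used to derive the classical momentum conservation law; here we simply execute it component-wise for radial fields rather than appealing to the abstract identity \eqref{conservation}.

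The main computation proceeds as follows. First I would compute $\partial_t \gamma_r$ by differentiating \eqref{radialflow} in $r$, obtaining $\partial_t \gamma_r(t,r) = u_r(t,\gamma(t,r))\,\gamma_r(t,r)$. Next, applying the chain rule to $\omega(t,\gamma(t,r))$ gives $\partial_t[\omega(t,\gamma)] = \omega_t(t,\gamma) + \omega_r(t,\gamma)\,u(t,\gamma)$, where I used \eqref{radialflow} again for $\partial_t\gamma$. Now differentiate $Q$ using the product rule:
\[
\partial_t Q = (n-1)\gamma^{n-2}\partial_t\gamma \cdot \gamma_r^2\, \omega(t,\gamma) + \gamma^{n-1}\cdot 2\gamma_r\partial_t\gamma_r\cdot \omega(t,\gamma) + \gamma^{n-1}\gamma_r^2\,\partial_t[\omega(t,\gamma)].
\]
Substituting the expressions above and factoring out $\gamma^{n-1}\gamma_r^2$ yields
\[
\partial_t Q = \gamma^{n-1}\gamma_r^2\Big[(n-1)\tfrac{u(t,\gamma)}{\gamma}\,\omega(t,\gamma) + 2u_r(t,\gamma)\,\omega(t,\gamma) + \omega_t(t,\gamma) + u(t,\gamma)\,\omega_r(t,\gamma)\Big].
\]
The bracketed expression is exactly the left-hand side of the radial EPDiff equation \eqref{mainomega} evaluated at the point $(t,\gamma(t,r))$, hence it vanishes since $u,\omega$ solve \eqref{mainomega} for all $r \ge 0$. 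Therefore $\partial_t Q \equiv 0$, and integrating in $t$ from $0$ completes the proof.

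The computation is entirely routine once the chain-rule bookkeeping is set up correctly, so there is no serious obstacle; the only point requiring mild care is keeping track of where functions are evaluated — all the $u$, $u_r$, $\omega$, $\omega_t$, $\omega_r$ terms must be evaluated at the moved point $\gamma(t,r)$, not at $r$ — and making sure the factor $\tfrac{n-1}{r}$ in \eqref{mainomega} correctly becomes $\tfrac{n-1}{\gamma(t,r)}$ after the substitution, which it does precisely because the $(n-1)\gamma^{n-2}$ term from differentiating $\gamma^{n-1}$ pairs with one power of $\gamma$ in the denominator. One should also note that $\gamma_r(t,r) > 0$ on $[0,T)$ (so that $\gamma(t,\cdot)$ remains a diffeomorphism and the manipulations are valid), which holds on the maximal interval of existence since the flow starts at the identity and $u$ is $C^1$ in space.
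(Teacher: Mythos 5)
Your computation is correct: differentiating $Q(t,r)=\gamma^{n-1}\gamma_r^2\,\omega(t,\gamma)$ in $t$, using $\partial_t\gamma=u(t,\gamma)$, $\partial_t\gamma_r=u_r(t,\gamma)\gamma_r$, and the chain rule for $\omega(t,\gamma)$, does produce $\gamma^{n-1}\gamma_r^2$ times the left-hand side of \eqref{mainomega} evaluated at $(t,\gamma(t,r))$, which vanishes, so $Q$ is constant in $t$ and equals its value $r^{n-1}\omega_0(r)$ at $t=0$. This is, however, a different route from the paper's: the paper does not reprove the identity but cites Lemma 3.6 of \cite{bauer2024liouville}, and frames it as a special case of the abstract Euler--Arnold momentum conservation law $\operatorname{Ad}^\top_{\eta(t)}U(t)=U_0$ in \eqref{conservation}, obtained by computing the group adjoint transpose explicitly on radial vector fields (which is where the geometric meaning of the weight $\gamma^{n-1}\gamma_r^2$ --- essentially the Jacobian factors appearing in $\operatorname{Ad}^\top$ --- comes from). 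Your direct verification is more elementary and self-contained, needing only the radial PDE \eqref{mainomega} and the flow equation \eqref{radialflow}; the abstract route explains \emph{why} such a conserved quantity exists and generalizes beyond the radial setting. Two small points of care in your write-up: the factorization that pulls out $\gamma^{n-1}$ involves dividing by $\gamma$, so strictly you should either keep the term $(n-1)\gamma^{n-2}u(t,\gamma)\gamma_r^2\omega(t,\gamma)$ unfactored or note that at $r=0$ (where $\gamma(t,0)=0$) both sides of \eqref{eqn:conservationlaw} vanish for $n\ge 2$ and the $\tfrac{n-1}{r}$ term is absent for $n=1$; and the hypothesis that $u$ is a classical ($C^1$) solution is what justifies the differentiability of $\gamma$ and $\gamma_r$ in $t$ used throughout (cf.\ Proposition~\ref{breakdownmech}), while the positivity of $\gamma_r$ is not actually needed for the argument.
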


\subsection{Momentum transport formulation}

We will use the conservation law of Proposition \ref{conservationlaw} to express the flow equation $\gamma_t=u\circ\gamma$ in integral form. This amounts to employing \eqref{reduction} from the general Euler-Arnold theory. The results here were established in our previous work \cite{bauer2024liouville}, but are essential to understand the methods used in the present article.

\begin{proposition}[Proposition 3.8 in \cite{bauer2024liouville}]
  Suppose that the inertia operator $A$ is invertible and that the solution of $A(u(r)\partial_r)=\omega(r)\partial_r$ is given by an integral formula of the form
    \begin{equation}\label{intform}
    u(t,r)=\int_0^r\delta(s,r)s^{n-1}\omega(t,s)\,ds+\int_r^\infty\delta(r,s)s^{n-1}\omega(t,s)\,ds,
    \end{equation}
    where the kernel $\delta$ is $C^1$ on $D = \{ (r,s) \, \vert\, \infty\ge s\ge r > 0\}\subset\mathbb{R}^2$. Let $u(t,r)$ be a solution of the radial EPDiff equation \eqref{mainomega} with $u(0,r)=u_0(r)$ and $\omega_0(r)\partial_r=A(u_0(r)\partial_r)$. If $z_0(r):=r^{n-1}\omega_0(r)$, then the flow $\gamma(t,r)$ satisfies
    \begin{equation}\label{gamma}
        \frac{\partial\gamma}{\partial t}(t,r)=\int_0^r\frac{\delta(\gamma(t,s),\gamma(t,r))}{\gamma_s(t,s)}z_0(s)\,ds+\int_r^\infty\frac{\delta(\gamma(t,r),\gamma(t,s))}{\gamma_s(t,s)}z_0(s)\,ds,
    \end{equation}
    and its spatial derivative satisfies
        \begin{equation}\label{rho}
        \frac{\partial}{\partial t}\ln\left(\gamma_r(t,r)\right)=\int_0^r\frac{\partial_2\delta(\gamma(t,s),\gamma(t,r))}{\gamma_s(t,s)}z_0(s)\,ds+\int_r^\infty\frac{\partial_1\delta(\gamma(t,r),\gamma(t,s))}{\gamma_s(t,s)}z_0(s)\,ds
    \end{equation}
\end{proposition}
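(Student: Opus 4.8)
The plan is to start from the conservation law \eqref{eqn:conservationlaw} of Proposition~\ref{conservationlaw} and the integral representation \eqref{intform} of the inertia-operator inverse, and simply substitute. Concretely, I would take the flow equation \eqref{radialflow}, namely $\gamma_t(t,r)=u(t,\gamma(t,r))$, and evaluate the integral formula \eqref{intform} for $u$ at the point $\gamma(t,r)$:
\begin{equation*}
\gamma_t(t,r)=\int_0^{\gamma(t,r)}\delta(s,\gamma(t,r))\,s^{n-1}\omega(t,s)\,ds+\int_{\gamma(t,r)}^\infty\delta(\gamma(t,r),s)\,s^{n-1}\omega(t,s)\,ds.
\end{equation*}
Now I would perform the change of variables $s=\gamma(t,\sigma)$ in both integrals. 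Since $\gamma(t,\cdot)$ is an increasing diffeomorphism of $[0,\infty)$ fixing $0$ (it is the flow of a vector field), the region $s\in[0,\gamma(t,r)]$ corresponds to $\sigma\in[0,r]$ and $s\in[\gamma(t,r),\infty)$ to $\sigma\in[r,\infty)$, and $ds=\gamma_\sigma(t,\sigma)\,d\sigma$. The key step is then to rewrite $s^{n-1}\omega(t,s)=\gamma(t,\sigma)^{n-1}\omega(t,\gamma(t,\sigma))$ using the conservation law \eqref{eqn:conservationlaw}, which gives
\begin{equation*}
\gamma(t,\sigma)^{n-1}\omega(t,\gamma(t,\sigma))=\frac{\sigma^{n-1}\omega_0(\sigma)}{\gamma_\sigma(t,\sigma)^2}=\frac{z_0(\sigma)}{\gamma_\sigma(t,\sigma)^2}.
\end{equation*}
Combining this with the Jacobian factor $\gamma_\sigma(t,\sigma)\,d\sigma$ leaves exactly one power of $\gamma_\sigma(t,\sigma)$ in the denominator, yielding \eqref{gamma} after relabeling $\sigma$ as $s$.

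For the derivative formula \eqref{rho}, I would differentiate \eqref{gamma} with respect to $r$. The dependence on $r$ enters only through the upper/lower limit $\gamma(t,r)$ appearing as the second argument of $\delta$ in the first integral and the first argument in the second integral (the integration variable and the $\gamma_s$, $z_0$ factors do not involve $r$). Differentiating under the integral sign produces $\partial_2\delta$ and $\partial_1\delta$ terms respectively, each multiplied by $\gamma_r(t,r)$ by the chain rule; the boundary terms from the Leibniz rule at $s=\gamma(t,r)$ cancel between the two integrals provided $\delta$ is continuous across the diagonal $s=r$, which is part of the hypothesis that $\delta$ is $C^1$ on $D$. Dividing through by $\gamma_r(t,r)$ (which is positive since $\gamma(t,\cdot)$ is a diffeomorphism) and recognizing the left side as $\partial_t\ln\gamma_r$ — here one uses that $\partial_r\gamma_t=\partial_t\gamma_r$ and divides by $\gamma_r$ — gives \eqref{rho}.

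I expect the main technical obstacle to be the justification of differentiation under the integral sign and the handling of the improper integral at $s=\infty$: one needs $z_0$ to have enough decay (guaranteed by $\omega_0\in H^\infty$ and the structure of $A$) and $\delta$ together with $\partial_1\delta,\partial_2\delta$ to be suitably bounded on $D$ so that the integrals in \eqref{gamma}--\eqref{rho} converge absolutely and the differentiation is legitimate; since $\delta$ is only assumed $C^1$ on the closed region $D$, some care is needed near the diagonal and near $r=0$. A secondary, more bookkeeping-type point is verifying that the Leibniz boundary terms genuinely cancel, which hinges on the symmetry built into the representation \eqref{intform} (the same kernel value $\delta(\gamma(t,r),\gamma(t,r))$ appears in both pieces). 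Everything else is the routine substitution and chain-rule computation sketched above.
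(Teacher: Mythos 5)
Your proposal is correct and is essentially the argument the paper intends (it defers the proof to the cited prior work, but Section 2.2 describes exactly this mechanism): substitute the inversion formula \eqref{intform} into $\gamma_t=u\circ\gamma$, change variables $s=\gamma(t,\sigma)$ and apply the conservation law of Proposition~\ref{conservationlaw} to get \eqref{gamma}, then differentiate in $r$, with the Leibniz boundary terms canceling across the diagonal and the identity $\partial_r\gamma_t=\partial_t\gamma_r$ yielding \eqref{rho}. No gaps beyond the routine integrability/decay checks you already flag.
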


The system \eqref{gamma}-\eqref{rho} can be written in the form of an autonomous vector field ODE on a Banach space, namely

\begin{equation}\label{gamma1}
        \frac{d\gamma}{dt}(r)=\int_0^r\frac{\delta(\gamma(s),\gamma(r))}{\rho(s)}z_0(s)\,ds+\int_r^\infty\frac{\delta(\gamma(r),\gamma(s))}{\rho(s)}z_0(s)\,ds,
    \end{equation}
        \begin{equation}\label{rho1}
        \frac{d\rho}{dt}(r)=\rho(r)\int_0^r\frac{\partial_2\delta(\gamma(s),\gamma(r))}{\rho(s)}z_0(s)\,ds+\rho(r)\int_r^\infty\frac{\partial_1\delta(\gamma(r),\gamma(s))}{\rho(s)}z_0(s)\,ds,
    \end{equation}
where $\rho(t,r)=\gamma_r(t,r),$ but we treat it as a separate variable to get a closed ODE system. Of course $\gamma$ and $\rho$ are not independent, but since $\gamma$ is the unique antiderivative of $\rho$ such that $\gamma(0)=0,$ we may consider the system as a single ODE for $\rho$ alone. In this sense, we may view \eqref{gamma1} as a consequence of \eqref{rho1}, although it is still convenient at times to treat both equations simultaneously. In fact we will combine these two integro-differential equations into one for our main breakdown results. 

For a fixed function $z_0$,  equation \eqref{rho} for $\rho$ makes sense on the space of bounded positive functions. 
More precisely, denote by $\mathcal{P}$ the space of continuous, bounded positive functions on $[0,\infty)$, i.e.,
\begin{equation}\label{positivecontinuous}
\mathcal{P}=\{\rho\in C([0,\infty),\mathbb{R}^+\,|\,\exists b\ge a>0\text{ such that } \rho(r)\in[a,b]\,\forall r\ge0\}.
\end{equation}
Let $\Gamma$ be the map from $\mathcal{P}$ to $C^1$ diffeomorphisms sending $\rho\mapsto\gamma$. Then we can express equations \eqref{gamma1}--\eqref{rho1} as a single vector field on $\mathcal{P}$.

We can prove local existence of this system with the homogeneous Sobolev inertia operator under the assumption that $z_0$ (or equivalently $\omega_0$) is in a certain weighted $L^1$ space;
see \cite{bauer2024liouville}. We thereby obtain local solutions $\rho$ in the space of continuous positive functions on $[0,\infty),$ and hence local existence of $C^1$ solutions $\gamma.$

We now present the breakdown mechanism employed in this paper. Recall that a classical solution $U:[0,1]\times\mathbb{R}^n\to\mathbb{R}^n$ exists so long as $U$ remains spatially $C^1$, corresponding to $u(t,r)$ being $C^1$ in $r\ge0$ for each $t\in[0,T).$ We will prove breakdown by showing that the radial Lagrangian flow $\gamma$ leaves the diffeomorphism group in finite time, which happens only when $u_r(t,r)$ blows up.

\begin{proposition}[Lemma 3.7 from \cite{bauer2024liouville}]\label{breakdownmech}
        Suppose $u:[0,T)\times[0,\infty)\to\mathbb R$ is a classical solution to the radial EPDiff equation \eqref{mainomega} that vanishes as $r\to\infty.$ It holds that:
    \begin{enumerate}
        \item The Lagrangian flow, defined by $\gamma_t(t,r)=u(t,\gamma(t,r))$ with $\gamma(0,r)=r$ exists on the same time interval as the solution $u$ and is $C^1$ in space and time. 
        \item If $\lim_{t\nearrow T}\gamma_r(t,r)=0$ for some $r\ge0$ and $T>0$, then $u$ cannot be extended as a $C^1$ solution to time $T$.
    \end{enumerate}
\end{proposition}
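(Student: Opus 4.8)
The plan is to regard the flow equation $\gamma_t(t,r)=u(t,\gamma(t,r))$, $\gamma(0,r)=r$, as a non-autonomous ODE on the state space $[0,\infty)$, parametrized by the initial point $r\ge 0$, and to apply the classical theory of existence, uniqueness, and differentiable dependence on initial conditions. Since $u$ is a classical solution on $[0,T)$, the map $r\mapsto u(t,r)$ is $C^1$, hence locally Lipschitz, with Lipschitz constant uniform for $t$ in compact subintervals $[0,T']\subset[0,T)$; moreover $u$ is continuous in $t$. Picard--Lindel\"of then yields, for each $r\ge 0$, a unique maximal solution $t\mapsto\gamma(t,r)$. To see that it is defined on all of $[0,T)$, I would use that $u$ vanishes as $r\to\infty$: together with continuity on $[0,\infty)$ this makes $u(t,\cdot)$ bounded, with $M(T'):=\sup_{t\in[0,T']}\|u(t,\cdot)\|_{L^\infty}<\infty$ on each compact subinterval, so that $|\gamma(t,r)-r|\le M(T')\,t$ and the trajectory cannot escape to infinity in finite time. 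Since by radial symmetry $u(t,0)=0$, the origin is fixed and trajectories remain in $[0,\infty)$. Hence the maximal existence interval of $\gamma(\cdot,r)$ is exactly $[0,T)$.

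For the regularity statement in item (1) I would invoke differentiable dependence on initial conditions: the trajectory $\{\gamma(s,r):s\in[0,T']\}$ is compact, so $u_r$ is bounded on a neighborhood of it, whence $r\mapsto\gamma(t,r)$ is $C^1$ and $\rho(t,r):=\gamma_r(t,r)$ solves the variational equation obtained by differentiating $\gamma_t=u(t,\gamma)$ in $r$,
\begin{equation}\label{eq:varflow}
\partial_t\rho(t,r)=u_r\big(t,\gamma(t,r)\big)\,\rho(t,r),\qquad \rho(0,r)=1,
\end{equation}
which integrates to
\begin{equation}\label{eq:rhoexp}
\gamma_r(t,r)=\exp\!\left(\int_0^t u_r\big(s,\gamma(s,r)\big)\,ds\right).
\end{equation}
Joint continuity of $\gamma$ and of $\gamma_r$ in $(t,r)$ then follows from \eqref{eq:rhoexp} together with the continuity of $u$, $u_r$, and $\gamma$, which completes item (1). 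I note that \eqref{eq:rhoexp} already shows $\gamma_r(t,r)>0$ for every $t\in[0,T)$, so the flow remains a $C^1$ (local) diffeomorphism for as long as $u$ itself exists.

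Item (2) is then a short contradiction argument based on \eqref{eq:rhoexp}. Suppose $u$ extended to a $C^1$ solution on the closed interval $[0,T]$ (equivalently, on some $[0,T+\varepsilon)$). Applying item (1) on $[0,T]$, the function $s\mapsto u_r(s,\gamma(s,r))$ would be continuous, hence bounded, on $[0,T]$, so the integral in \eqref{eq:rhoexp} would be finite and
\[
\lim_{t\nearrow T}\gamma_r(t,r)=\exp\!\left(\int_0^T u_r\big(s,\gamma(s,r)\big)\,ds\right)>0,
\]
contradicting the hypothesis $\lim_{t\nearrow T}\gamma_r(t,r)=0$. Therefore $u$ cannot be extended as a $C^1$ solution to time $T$.

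The main obstacle is entirely in item (1): one must verify that $u$, viewed as a non-autonomous vector field, has precisely the regularity and growth control needed for the ODE machinery to apply on the \emph{whole} interval $[0,T)$ and uniformly in $r$ on compacta --- local Lipschitz continuity in $r$ uniform on compact time intervals, the boundedness that prevents escape to infinity, invariance of $[0,\infty)$ under the flow, and differentiable dependence yielding \eqref{eq:varflow}. All of this rests on $u$ being a classical ($C^1$-in-space) solution that vanishes at infinity, with the decay hypothesis being exactly what upgrades the local-in-time statements to the full interval. Once \eqref{eq:rhoexp} is in hand, item (2) is essentially immediate.
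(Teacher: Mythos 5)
Your proposal is correct and follows essentially the same route as the cited source: the paper itself does not reprove this proposition (it quotes Lemma 3.7 of \cite{bauer2024liouville}), and the standard argument there is exactly what you give --- existence of the flow on all of $[0,T)$ from boundedness of $u$ (via vanishing at infinity) plus local Lipschitz continuity, $C^1$ dependence on $r$ through the variational equation yielding $\gamma_r(t,r)=\exp\bigl(\int_0^t u_r(s,\gamma(s,r))\,ds\bigr)$, and the contradiction that a $C^1$ extension to time $T$ would keep this exponential bounded away from zero. Your handling of the two points that actually need care (invariance of $[0,\infty)$ via $u(t,0)=0$ for a radial field, and no escape to infinity on compact time intervals) is sound.
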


\section{Two breakdown criteria for the EPDiff equation}\label{sec:generalbreakdown}

In this section, we present two general breakdown results that are in principle applicable to EPDiff equations with various inertia operators, not only those of Sobolev type: first, in Theorem~\ref{generalbreakdown}, we will present a slightly stronger version of the comparison theory based theorem of~\cite{bauer2024liouville}. Second, in Theorem \ref{breakdowndim}, we present a breakdown result for the higher-dimensional EPDiff equation that allows one to reduce breakdown to the smallest dimension where breakdown occurs. This result directly leads to a new and simpler proof for  breakdown of the higher-dimensional Burgers' equation and the higher-dimensional Camassa-Holm equation, as described in Corollary \ref{burgersCH}.

\subsection{Breakdown criteria for radial solutions}\label{sec:radialblowup}

The following theorem is a modified version of the main breakdown criteria (Theorem B) provided in \cite{bauer2024liouville}. The hypotheses here are weaker and seem to be easier to verify in certain situations, such as the situation herein for the higher-order homogeneous Sobolev inertia operator. 

\begin{theorem}\label{generalbreakdown}
Let $u_0$ be initial conditions  such that $\omega_0(r)=Au_0(r)\leq 0$ for all $r>0$, and let the solution to $A(u(r)\partial_r)=\omega(r)\partial_r$ be given by an integral formula of the form \eqref{intform} for a kernel $\delta$ that is smooth and positive
on $D=\{(r,s)\, \vert s\ge r\ge 0\} \backslash\{(0,0)\}$,  
such that for all $(r,s)\in D$, there exists a function $Q:[0,\infty)\to(0,\infty)$ and a constant $C>0$ that satisfies
\begin{equation}\label{hypothesis1}
     \displaystyle{\frac{Q'(r)}{Q(r)}\delta(s,r)+\partial_2\delta(s,r)\geq0} \quad\text{for all}\quad s\in[0,r),
\end{equation}
\begin{equation}\label{hypothesis2}
     \displaystyle{\frac{Q'(r)}{Q(r)}\delta(r,s)+\partial_1\delta(r,s)\geq\frac{C}{Q(s)}} \quad\text{for all}\quad s\in[r,\infty).
\end{equation}
Assume in addition that there exists a local solution $(\gamma,\rho)$ to equations \eqref{gamma1}-\eqref{rho1} with initial condition $u_0$. Then the solution $\rho(t,r)=\gamma_r(t,r)$ reaches zero in finite time $T$ and thus the $C^1$ norm of $u(T,\cdot)$ blows up  in the sense of Proposition~\ref{breakdownmech}.
\end{theorem}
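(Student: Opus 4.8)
## Proof proposal

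The plan is to combine the two integro-differential equations \eqref{gamma1}--\eqref{rho1} into a single differential inequality for the quantity $\rho(t,r)/Q(\gamma(t,r))$ evaluated at a suitable fixed material point $r$, and then compare it with an ODE that blows up (reaches zero) in finite time. The role of the weight $Q$ is to absorb the ``extra'' terms coming from the transport of the kernel along the flow: hypotheses \eqref{hypothesis1} and \eqref{hypothesis2} are precisely the conditions needed so that, after multiplying \eqref{rho1} by the logarithmic derivative of $Q\circ\gamma$ and adding, the first integral in \eqref{rho1} (over $s\in[0,r)$) contributes a nonnegative amount while the second integral (over $s\in[r,\infty)$) is bounded below by a strictly positive multiple of $\int_r^\infty z_0(s)/(Q(\gamma(s))\rho(s))\,ds$.

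First I would fix $r>0$ and introduce $w(t):=\ln\rho(t,r) + \ln Q(\gamma(t,r))$, or rather work directly with $v(t):=\rho(t,r)Q(\gamma(t,r))$. Differentiating $\ln v(t)$ in time, using \eqref{rho1} for the $\rho$ part and \eqref{gamma1} (that is, $\gamma_t(t,r)=u(t,\gamma(t,r))$) for the $Q\circ\gamma$ part, one gets
\begin{equation*}
\frac{d}{dt}\ln v(t) = \int_0^r \frac{\frac{Q'(\gamma(r))}{Q(\gamma(r))}\delta(\gamma(s),\gamma(r)) + \partial_2\delta(\gamma(s),\gamma(r))}{\rho(s)}\,z_0(s)\,ds + \int_r^\infty \frac{\frac{Q'(\gamma(r))}{Q(\gamma(r))}\delta(\gamma(r),\gamma(s)) + \partial_1\delta(\gamma(r),\gamma(s))}{\rho(s)}\,z_0(s)\,ds,
\end{equation*}
where I abbreviate $\gamma(r)=\gamma(t,r)$. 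Since $z_0(s)=s^{n-1}\omega_0(s)\le 0$ by hypothesis and $\rho(s)>0$, the sign condition \eqref{hypothesis1} (applied at the flowed-out points $\gamma(s)\le\gamma(r)$, which is where the monotonicity of $\gamma$ lets us use the hypothesis) makes the first integrand nonpositive times a nonpositive weight, hence the first integral is $\ge 0$; and \eqref{hypothesis2} gives that the second integrand is $\ge C/(Q(\gamma(s))\rho(s))$, so that the second integral is $\le C\int_r^\infty \frac{z_0(s)}{Q(\gamma(s))\rho(s)}\,ds$, a nonpositive quantity. Combining, $\frac{d}{dt}\ln v(t) \le C\int_r^\infty \frac{z_0(s)}{v(t,s)}\,ds$ where I write $v(t,s)=\rho(t,s)Q(\gamma(t,s))$.

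The next step is to close this into a scalar inequality. Writing $F(t):=\int_r^\infty \frac{z_0(s)}{v(t,s)}\,ds \le 0$ and noting that the same computation at a point $s\ge r$ gives $\frac{d}{dt}\ln v(t,s)\le C F(t)$ as well (the interval $[s,\infty)\subset[r,\infty)$, and the omitted piece $[r,s)$ only makes the bound more negative, while the $[0,s)$ integral is still $\ge 0$), so every $v(t,s)$ with $s\ge r$ is nonincreasing once $F$ becomes negative — and $F(0)<0$ as long as $\omega_0$ is not identically zero on $[r,\infty)$, which we arrange by choice of initial data. Then $v(t,s)\le v(0,s)$ for $s\ge r$, hence $F(t)\le \int_r^\infty \frac{z_0(s)}{v(0,s)}\,ds =: -\kappa$ with $\kappa>0$. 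Feeding this back, $\frac{d}{dt}\ln v(t,r) \le -C\kappa$, so $v(t,r)\le v(0,r)e^{-C\kappa t}$. This forces $\rho(t,r)=v(t,r)/Q(\gamma(t,r))\to 0$; to conclude a finite-time breakdown rather than merely exponential decay, I would instead keep the self-improving feedback: as $v(\cdot,s)$ decreases, $-F(t)$ grows, so in fact $\frac{d}{dt}\ln v(t,r)\le C F(t)$ with $-F(t)\ge -F(0)\cdot(\text{something increasing})$; a cleaner route is the comparison with the Liouville-type ODE $\dot{y}=-c y^{?}$ already used in \cite{bauer2024liouville}, giving $\rho(t,r)$ reaching $0$ at some finite $T\le T_0$. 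Once $\rho(T,r)=0$, Proposition~\ref{breakdownmech}(2) gives that $u$ cannot be continued as a $C^1$ solution past $T$, which is the claim.

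The main obstacle I anticipate is the interchange of the hypotheses, which are stated for the \emph{fixed} domain $D$ in the $(r,s)$ variables, with their use along the flow at the \emph{moving} points $(\gamma(t,s),\gamma(t,r))$: one must check that $s\le r \iff \gamma(t,s)\le\gamma(t,r)$ (true since $\gamma(t,\cdot)$ is an increasing diffeomorphism for $t<T$, as $\rho>0$ on the existence interval), and that $Q'(\gamma(r))/Q(\gamma(r))$ is exactly the coefficient produced by differentiating $\ln Q(\gamma(t,r))$ — so the weight $Q$ must be applied at $\gamma(t,r)$, not at $r$, and the estimate $C/Q(\gamma(s))$ in \eqref{hypothesis2} is what produces the crucial $1/v(t,s)$ (rather than $1/\rho(s)$) in $F$. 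A secondary technical point is justifying differentiation under the integral sign and the finiteness of $F(0)$, which follows from the weighted-$L^1$ assumption on $z_0$ (equivalently $\omega_0$) under which local existence was established, together with the uniform bounds $a\le\rho\le b$ and positivity of $Q$ on compact sets. Finally, one must arrange the initial data so that $\omega_0\le 0$, $\omega_0\not\equiv 0$, and $u_0\in H^\infty$ simultaneously, which is a soft construction but needs to be stated.
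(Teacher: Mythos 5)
Your derivation of the key differential inequality is the same as the paper's: you track $v(t,r)=Q(\gamma(t,r))\rho(t,r)$ (the paper's $q(t,r)$ up to the time-independent factor $1/Q(r)$), differentiate its logarithm using \eqref{gamma1}--\eqref{rho1}, and apply \eqref{hypothesis1}--\eqref{hypothesis2} at the flowed points (legitimate since $\gamma(t,\cdot)$ is increasing while $\rho>0$), arriving at $\tfrac{d}{dt}\ln v(t,r)\le C\int_r^\infty z_0(s)/v(t,s)\,ds$, which is exactly the paper's inequality \eqref{blowupinequality}. Two sign slips along the way: by \eqref{hypothesis1} the bracket in the $[0,r)$ integral is nonnegative and the weight $z_0/\rho$ is nonpositive, so that integral is $\le 0$, not $\ge 0$ as you wrote --- and its nonpositivity is precisely what allows you to discard it in an upper bound; likewise, for $s'\ge r$ the correct statement is $\tfrac{d}{dt}\ln v(t,s')\le C\int_{s'}^\infty z_0/v\,ds\le 0$ (omitting $[r,s')$ makes the right side \emph{less} negative, not more), which is all you need for monotonicity of $v(\cdot,s')$, but your stronger claim $\tfrac{d}{dt}\ln v(t,s')\le CF(t)$ is false as stated.

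The genuine gap is the closure. Your bootstrap --- $v(t,s)$ nonincreasing, hence $F(t)\le-\kappa$, hence $v(t,r)\le v(0,r)e^{-C\kappa t}$ --- yields only decay as $t\to\infty$, which does not prove the theorem: $\rho(t,r)>0$ for all finite $t$ is perfectly compatible with a global $C^1$ solution, and Proposition~\ref{breakdownmech} requires $\gamma_r(T,r)=0$ at a \emph{finite} $T$. (Exponential decay of $v$ also does not by itself force $\rho\to0$ unless $Q(\gamma(t,r))$ stays bounded below for all time, which is automatic only on compact time intervals.) The missing step is exactly the one you leave as the placeholder ``$\dot y=-cy^{?}$'': the paper regards \eqref{blowupinequality} as a differential inequality on the Banach space \eqref{positivecontinuous} and invokes an ODE comparison theorem (Lemma 2.3 of \cite{bauer2024liouville}) against the nonlocal Liouville equation obtained by replacing the inequality with equality, whose explicit solution (Lemma 2.1 of \cite{bauer2024liouville}) reaches zero at a finite time; since $Q>0$, $q(T,r)=0$ forces $\rho(T,r)=0$, and Proposition~\ref{breakdownmech} then gives the $C^1$ blowup. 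Without carrying out, or at least precisely citing, this comparison-with-Liouville argument, the finite-time conclusion --- the entire content of the theorem --- is not established. (The soft points you flag at the end, $\omega_0\not\equiv0$, finiteness of the integrals, and differentiation under the integral, are indeed handled by the weighted-$L^1$ hypothesis and local existence assumption, so they are not the issue.)
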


\begin{proof}
    In the following computation we suppress the dependence on $t$. Using the system \eqref{gamma1}-\eqref{rho1}, along with the assumption $\omega_0\leq0,$ we compute that
  \begin{equation}\label{eq:trueequation}
    \begin{aligned}
        \frac{d}{dt}\ln\left(Q(\gamma)\rho\right)&=\frac{Q'(\gamma(r))}{Q(\gamma(r))}\frac{d\gamma}{dt}+\frac{d}{dt}\ln\rho\\
        &=-\int_0^r\left(
        \frac{Q'(\gamma(r))}{Q(\gamma(r))}\delta(\gamma(s),\gamma(r))+\partial_2\delta(\gamma(s),\gamma(r))\right)\frac{s^{n-1}|\omega_0(s)|}{\rho(s)}\,ds\\
        &\qquad-\int_r^\infty\left(
        \frac{Q'(\gamma(r))}{Q(\gamma(r))}\delta(\gamma(r),\gamma(s))+\partial_1\delta(\gamma(r),\gamma(s))\right)\frac{s^{n-1}|\omega_0(s)|}{\rho(s)}\,ds.
    \end{aligned}
    \end{equation}
By \eqref{hypothesis1}, the integral over $[0,r]$ has an everywhere positive integrand. This along with \eqref{hypothesis2} implies that the quantity $q(t,r)=Q(\gamma(t,r))\rho(t,r)/Q(r)$ satisfies the differential inequality 
\begin{equation}\label{blowupinequality}
    \frac{\partial}{\partial t}\ln\left(q(t,r)\right)\leq-C\int_r^\infty\frac{|z_0(s)|/Q(s)}{q(t,s)}\,ds.
\end{equation}
Since this inequality holds on the Banach space of positive continuous functions defined by \eqref{positivecontinuous}, we can now apply standard ODE comparison theorems, see e.g. Lemma 2.3 in \cite{bauer2024liouville} for a version of such a theorem adapted to the present context. Consequently the solution to~\eqref{eq:trueequation} is upper bounded by the solution to the ODE corresponding to equality in \eqref{blowupinequality}. This ODE is precisely the Liouville equation \cite{liouville1853equation}, which admits an explicit solution formula and reaches zero in finite time, see e.g. Lemma 2.1 in \cite{bauer2024liouville} for the explicit solution formula with the precise boundary conditions used in the current situation. An application of the aforementioned ODE comparison theorem thereby implies that the solution $q(t,r)$ to~\eqref{eq:trueequation} approaches zero in finite time. Since our function $Q$ is assumed to be nonzero everywhere, this can only happen if $\rho(T,r)=\gamma_r(T,r)=0$ for some $T>0,$ which by Proposition \ref{breakdownmech} implies that the $C^1$ norm of $u$ approaches infinity as $t\nearrow T.$
\end{proof}

\begin{remark}\label{canonicalQ}
Suppose the kernel $\delta$ is of the form $\delta(r,s)=rs\varphi(r,s),$ where $\varphi$ is smooth and positive
on $D=\{(r,s)\, \vert s\ge r\ge 0\} \backslash\{(0,0)\}$. To apply Theorem~\ref{generalbreakdown} one needs to construct an unknown function $Q$, that 
serves as the basis for the comparison theory. In our experience the choice
\begin{equation}
Q(r):=\frac{1}{r\varphi(0,r)},\qquad Q(0):=\lim_{r\to0^+}\frac{1}{r\varphi(0,r)}.   
\end{equation}
seems to be generally a good candidate for this. In this case the inequalities \eqref{hypothesis1} and \eqref{hypothesis2} read as:
\begin{equation}\label{condition1}
\varphi(0,r)\frac{\partial\varphi}{\partial r}(s,r)-\frac{\partial\varphi}{\partial r}(0,r)\varphi(s,r)\geq0\quad\text{for all}\quad s\in[0,r),
\end{equation}
\begin{equation}\label{condition2}
    \varphi(0,r)\frac{\partial\varphi}{\partial r}(r,s)-\frac{\partial\varphi}{\partial r}(0,r)\varphi(r,s)\geq\frac{C\varphi(0,s)\varphi(0,r)}{r} \quad\text{for all}\quad s\in[r,\infty).
\end{equation}
\end{remark}

\subsection{Breakdown criteria by reducing the dimension}\label{sec:dimensionblowup}
For the EPDiff equation with the Sobolev inertia operator of order $k$, we expect breakdown for every dimension $n$ satisfying $k<n/2+1$. Thus, if breakdown occurs in dimension $n,$ it ought to also occur in any larger dimension. The theorem presented here embodies this observation; it reduces breakdown of the EPDiff equation to the smallest dimension where breakdown first occurs. For this to work, the inertia operator $A$ must satisfy some sort of compatibility condition with the dimension of the vector-valued function on which it acts. Specifically, we need
\begin{equation}\label{compatibledim}
    A_{\mathbb R^{n+1}}\left(u(x_1,\dots,x_n),0\right)=\left(\left(A_{\mathbb R^{n}}u\right)(x_1,\dots,x_n),0\right)\qquad\text{for all $u\in H^{\infty}(\mathbb R^n,\mathbb R^n)$},
\end{equation}
which is easily seen to hold true for any inertia operator that is defined in terms of the Laplacian, such as the integer-order Sobolev inertia operators $A=(\sigma-\Delta)^k$. This also holds for the \textit{fractional} Laplacian $A=(-\Delta)^s$ with $s\in(0,1)$, since it is a Fourier multiplier defined implicitly by
\[\widehat{(-\Delta)^sf}(\xi):=\vert\xi\vert^{2s}\hat{f}(\xi),\]
and the Fourier transform and its inverse both satisfy the compatibility condition \eqref{compatibledim}.

\begin{theorem}\label{breakdowndim}
For any $n\geq 1$, consider the EPDiff equation \eqref{eq:EPDiff} on $\mathbb R^n$ with inertia operator $A=A_{\mathbb R^n}$. Suppose that $A$ is compatible with the dimension in the sense of equation \eqref{compatibledim}. If there exists smooth initial conditions $u_0\in C^{\infty}(\mathbb R^n,\mathbb R^n)$ such that the $n$-dimensional EPDiff equation breaks down in finite time, then there exists smooth initial conditions $\tilde{u}_0\in C^{\infty}(\mathbb R^{n+1},\mathbb R^{n+1})$ such that the $(n+1)$-dimensional EPDiff equation breaks down in finite time. 
\end{theorem}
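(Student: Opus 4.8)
The plan is to lift a breakdown solution from $\mathbb R^n$ to $\mathbb R^{n+1}$ by extending it trivially in the new variable, so that the new solution inherits the $C^1$ blow-up of the old one. Suppose $u_0\in C^\infty(\mathbb R^n,\mathbb R^n)$ is initial data for which the $n$-dimensional EPDiff equation \eqref{eq:EPDiff} has a classical solution $U=u(t,\cdot)$ on a maximal interval $[0,T)$ whose spatial $C^1$ norm is unbounded as $t\nearrow T$. I would set
\[
\tilde u_0(x_1,\dots,x_{n+1}):=\bigl(u_0(x_1,\dots,x_n),\,0\bigr),\qquad
\tilde U(t,x_1,\dots,x_{n+1}):=\bigl(u(t,x_1,\dots,x_n),\,0\bigr),
\]
so that $\tilde u_0\in C^\infty(\mathbb R^{n+1},\mathbb R^{n+1})$ and $\tilde U$ is smooth in space, $C^1$ in time, independent of $x_{n+1}$, with vanishing last component. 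The claim is that $\tilde U$ solves the $(n+1)$-dimensional EPDiff equation with initial velocity $\tilde u_0$ and breaks down at the same time $T$.

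The first step is to verify the equation. Put $\Omega:=A_{\mathbb R^n}U$ and $\tilde\Omega:=A_{\mathbb R^{n+1}}\tilde U$; by the compatibility hypothesis \eqref{compatibledim} one gets $\tilde\Omega=(\Omega,0)$, again independent of $x_{n+1}$ with zero last component. Then each term of \eqref{eq:EPDiff} respects this block structure: $\tilde\Omega_t=(\Omega_t,0)$; since $\tilde U_{n+1}\equiv0$ and $\partial_{n+1}\tilde\Omega\equiv0$ one has $\nabla_{\tilde U}\tilde\Omega=(\nabla_U\Omega,0)$; since the Jacobian $\nabla\tilde U$ is block-diagonal with blocks $\nabla U$ and $0$, one gets $(\nabla\tilde U)^{T}\tilde\Omega=\bigl((\nabla U)^{T}\Omega,0\bigr)$; and $\operatorname{div}(\tilde U)=\operatorname{div}(U)$, hence $\operatorname{div}(\tilde U)\tilde\Omega=(\operatorname{div}(U)\Omega,0)$. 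Adding the four contributions, the $(n+1)$-dimensional equation \eqref{eq:EPDiff} for $(\tilde U,\tilde\Omega)$ reduces in its first $n$ components to exactly the $n$-dimensional equation for $(U,\Omega)$, which holds by assumption, and to the identity $0=0$ in the last component. Thus $\tilde U$ is a classical solution of $(n+1)$-dimensional EPDiff on $[0,T)$ with $\tilde U(0)=\tilde u_0$.

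The second step is the blow-up comparison. Because $\tilde U$ is independent of $x_{n+1}$, its spatial $C^1$ norm over $\mathbb R^{n+1}$ equals that of $U$ over $\mathbb R^n$ for every $t$, so it too becomes unbounded as $t\nearrow T$. By local well-posedness and uniqueness of classical solutions, $\tilde U$ is the maximal solution issued from $\tilde u_0$, whence its lifespan is at most $T<\infty$; equivalently $\tilde u_0$ cannot be continued as a classical solution past $T$. This is precisely breakdown in finite time for the $(n+1)$-dimensional equation, proving the theorem.

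The computation itself is pure bookkeeping; the one point that needs care — and the reason the compatibility condition \eqref{compatibledim} is hypothesized — is that the whole argument must be run in a class of vector fields closed under the extension $u\mapsto(u,0)$, which is bounded but does not decay in $x_{n+1}$. This is consistent with the intended applications in Corollary \ref{burgersCH}, where the one-dimensional breakdown data (for instance for Burgers) is itself non-decaying, so iterating the theorem loses nothing. One should also confirm that $A_{\mathbb R^{n+1}}$ is well defined on such extensions and that \eqref{compatibledim} persists there; for the operators of interest (differential operators in $\Delta$, or the fractional Laplacian as a Fourier multiplier) this is immediate, so the argument goes through in all relevant cases, in particular for the integer-order homogeneous Sobolev operators $A=(-\Delta)^k$.
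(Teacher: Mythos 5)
Your proposal is correct and follows essentially the same route as the paper: extend the solution trivially by $\tilde U=(U,0)$, use the compatibility hypothesis \eqref{compatibledim} to get $\tilde\Omega=(\Omega,0)$, check each term of \eqref{eq:EPDiff} respects the block structure, and transfer the $C^1$ blow-up since $\tilde U$ is independent of $x_{n+1}$. Your closing remark about the non-decaying extension matches the paper's own observation in Remark~\ref{infiniteenergy}.
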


\begin{proof}
Suppose $u:\mathbb{R}^n\to\mathbb{R}^n$ solves the $n$ dimensional EPDiff equation with initial condition $u_0.$ Define
$\tilde{u}:\mathbb{R}^{n+1}\to\mathbb{R}^{n+1}$ by 
\[\tilde{u}(x_1,\dots,x_{n+1}):=\left(u(x_1,\dots, x_n),0\right).\]
If $A$ is compatible with the dimension in the sense of equation \eqref{compatibledim}, then we can write $m=A_{\mathbb{R}^n}u=(m_1,\dots,m_n)$ and define 
 \[\tilde{m}:=A_{\mathbb{R}^{n+1}}\tilde{u}=\left(A_{\mathbb{R}^n}u,0\right)=(m_1,\dots,m_n,0).\]
   In Euclidean space, $\nabla_{\tilde{u}}\tilde{m}=(\nabla\tilde {m})^T\tilde{u},$ and from the definitions above, we have 
    \[
(\nabla \tilde{m})^T\tilde{u}=
    \begin{pmatrix}
\frac{\partial m_1}{\partial x_1} & \dots & \frac{\partial m_1}{\partial x_n} & \frac{\partial m_1}{\partial x_{n+1}}\\
\vdots & \ddots & \vdots & \vdots\\
\frac{\partial m_n}{\partial x_1} & \dots & \frac{\partial m_n}{\partial x_n} & \frac{\partial m_n}{\partial x_{n+1}}\\
0 & \dots & 0 & 0
\end{pmatrix}
\begin{pmatrix}
    u_1 \\
    \vdots \\
    u_n \\
    0 
\end{pmatrix}
=
\begin{pmatrix}
(\nabla m)^T & 0 \\
0 & 0 \\
\end{pmatrix}
\begin{pmatrix}
    u \\
    0 
\end{pmatrix}.
    \]
Similarly, it holds that
    \[
(\nabla \tilde{u})^T\tilde{m}
=
\begin{pmatrix}
(\nabla u)^T & 0 \\
0 & 0 \\
\end{pmatrix}
\begin{pmatrix}
    m \\
    0 
\end{pmatrix}
    \]
Note also 
\[\operatorname{div}(\tilde{u})\tilde{m}=\operatorname{div}(u)
\begin{pmatrix}
    m \\
    0 
\end{pmatrix},\qquad
\tilde{m}_t=
\begin{pmatrix}
    m_t \\
    0 
\end{pmatrix}
\]
Therefore, if $u$ is a solution to the $n$-dimensional EPDiff equation and breaks down with initial condition $u_0$, then $\tilde{u}$ solves the $(n+1)$-dimensional EPDiff equation and will break down with initial condition $\tilde{u}_0$.
\end{proof}

The previous theorem leads to a new and significantly simpler proof of blow-up for both the higher-dimensional Burgers' equation, originally established in \cite{chae2012blow}, and the higher-dimensional Camassa-Holm equation, originally established in \cite{li2013euler}. Moreover, it leads to a new blow-up result for the higher-dimensional modified Constantin-Lax-Majda equation; the one-dimensional version was shown to have blow-up solutions in \cite{castro2010infinite,bauer2016geometric}. 

\begin{corollary}\label{burgersCH}
There are smooth initial data such that the following members of the EPDiff family break down in finite time.
\begin{itemize}
\item The higher-dimensional Burgers' equation, which corresponds to EPDiff with inertia operator $A=\operatorname{id}$.
\item The higher-dimensional Camassa-Holm equation, which corresponds to EPDiff with inertia operator $A=\operatorname{id}-\Delta$.
\item The higher-dimensional modified Constantin-Lax-Majda equation, which corresponds to EPDiff with inertia operator $A=(-\Delta)^{1/2}.$
\end{itemize}
\end{corollary}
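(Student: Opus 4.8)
The plan is to apply Theorem~\ref{breakdowndim} repeatedly, bootstrapping from the smallest dimension in which breakdown is already known for each of the three inertia operators. For each bullet I must therefore (i) check that the inertia operator satisfies the dimensional compatibility condition \eqref{compatibledim}, and (ii) exhibit a base dimension $n_0$ in which breakdown of the corresponding EPDiff equation is already established in the literature. Compatibility for $A=\operatorname{id}$ and $A=\operatorname{id}-\Delta$ is immediate since both are defined in terms of the Laplacian (the identity trivially, and $\Delta$ by the discussion following \eqref{compatibledim}); compatibility for $A=(-\Delta)^{1/2}$ follows from the Fourier-multiplier argument already given in the excerpt, since $(-\Delta)^{1/2}$ is the $s=1/2$ case of the fractional Laplacian.

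Next I would identify the base cases. For Burgers' ($A=\operatorname{id}$) the one-dimensional equation is the inviscid Burgers' equation, whose breakdown via shock formation is classical; so $n_0=1$. For Camassa--Holm ($A=\operatorname{id}-\Delta$), breakdown in dimension $n_0=1$ is the classical Camassa--Holm wave-breaking result of Camassa--Holm and Constantin--Escher cited in the introduction. For the modified Constantin--Lax--Majda equation ($A=(-\Delta)^{1/2}$), which is the $k=1/2$, $\sigma=0$ case, one-dimensional blowup is known from \cite{castro2010infinite,bauer2016geometric}; so again $n_0=1$. In each case, starting from a smooth one-dimensional solution that breaks down in finite time, Theorem~\ref{breakdowndim} produces smooth two-dimensional initial data whose EPDiff solution breaks down; applying the theorem $n-1$ more times yields, by induction on the dimension, smooth initial data in $\mathbb{R}^n$ for every $n\ge 1$ with finite-time breakdown. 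This is a routine induction: the inductive step is exactly one invocation of Theorem~\ref{breakdowndim}, and the hypothesis \eqref{compatibledim} is stable under the embedding since $A$ is the same family of operators in every dimension.

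The only genuine subtlety — and the place I would be most careful — is verifying that the ``breakdown in finite time'' used as input in dimension $n_0$ really matches the notion of breakdown in the conclusion of Theorem~\ref{breakdowndim}, namely loss of the $C^1$ (or classical-solution) bound in the sense of Proposition~\ref{breakdownmech}. For Burgers' and Camassa--Holm this is standard (shock formation, respectively $u_x\to-\infty$ wave breaking), and for mCLM the cited results give exactly a gradient blowup, so in all three cases the input and output notions agree and the induction goes through. I would also note for completeness that the smoothness claim $\tilde u_0\in C^\infty(\mathbb{R}^{n+1},\mathbb{R}^{n+1})$ is immediate from the construction $\tilde u(x_1,\dots,x_{n+1})=(u(x_1,\dots,x_n),0)$ in the proof of Theorem~\ref{breakdowndim}, since appending a zero component and an unused variable preserves smoothness. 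Thus the corollary follows with essentially no new computation beyond bookkeeping the base cases.
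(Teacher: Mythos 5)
Your proposal is correct and follows essentially the same route as the paper: cite the known one-dimensional breakdown results for Burgers', Camassa--Holm, and the modified Constantin--Lax--Majda equation, then iterate Theorem~\ref{breakdowndim} (whose compatibility hypothesis \eqref{compatibledim} holds for these Laplacian-based and fractional-Laplacian operators) to reach every dimension $n\ge 1$. Your extra remarks on matching the notion of breakdown and on smoothness of the lifted data are sensible bookkeeping but do not change the argument.
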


\begin{proof} 
 Breakdown of the one-dimensional Burgers' equation can be shown straightforwardly using the method of characteristics, and the exact time at which breakdown occurs can be derived. Breakdown of the Camassa-Holm equation was known since its original derivation in \cite{camassa1993integrable}, and was rigorously established in \cite{constantin1998global}. Breakdown of the one-dimensional modified Constantin-Lax-Majda equation was established in \cite{castro2010infinite}, and the class of initial data leading to blow-up solutions was then extended in \cite{bauer2016geometric}. Iterative applications of Theorem \ref{breakdowndim} to the breakdown of these one-dimensional equations then yield breakdown in every dimension $n\ge1.$ 
\end{proof}

\begin{remark}\label{infiniteenergy}
The breakdown solution $\tilde{u}$ supplied by Theorem \ref{breakdowndim} belongs only to $C^\infty(\mathbb{R}^n,\mathbb{R}^n),$ as opposed to $H^\infty(\mathbb{R}^n,\mathbb{R}^n)$ or $C_c^\infty(\mathbb{R}^n,\mathbb{R}^n).$ This is because $\tilde{u}$ is constant in $x_{n+1}$, and hence integration over $\mathbb{R}$ with respect to this variable will not yield a finite quantity. 
\end{remark}

\section{Breakdown of smooth solutions for the EPDiff with homogeneous Sobolev inertia operator}\label{breakdown}
We are now ready to  formulate the main result of the present paper:
\begin{theorem}\label{theorem:maintheorem}
    Let $0\le k<n/2+1$. Then there is radial initial data $u_0\in H^\infty(\mathbb{R}^n,\mathbb{R}^n)$ such that the corresponding radial solution $u$ to the $n$-dimensional EPDiff equation \eqref{eq:EPDiff} with the homogeneous Sobolev inertia operator $A=(-\Delta)^k$ has $C^1$ norm that blows up in finite time. 
\end{theorem}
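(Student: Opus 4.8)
The plan is to deduce the theorem from the general breakdown criterion Theorem~\ref{generalbreakdown} applied to $A=(-\Delta)^k$, so that everything reduces to three tasks: (a) produce the kernel $\delta$ of the representation \eqref{intform} in a usable closed form and check it has the shape $\delta(r,s)=rs\,\varphi(r,s)$ with $\varphi$ smooth and positive on $D$; (b) exhibit a comparison function $Q$ for which the hypotheses \eqref{hypothesis1}--\eqref{hypothesis2} hold; and (c) exhibit admissible initial data together with local existence of \eqref{gamma1}--\eqref{rho1}. It is also worth observing that Theorem~\ref{breakdowndim} applies here, since $A=(-\Delta)^k$ is compatible with the dimension in the sense of \eqref{compatibledim}; hence for each $k\ge1$ it suffices to treat the single ``worst'' dimension $n=2k-1$ (plus the elementary case $k=0$, where EPDiff is Burgers'). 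This trims the bookkeeping but not the analytic core.

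\textbf{Step 1: the Green function.} By Remark~\ref{vectorlaplacian}, on radial vector fields $A=(-\Delta)^k$ acts as $(-L)^k$, where $Lu=u''+\tfrac{n-1}{r}u'-\tfrac{n-1}{r^2}u$ and $Lr^{\alpha}=(\alpha-1)(\alpha+n-1)r^{\alpha-2}$. Iterating, the homogeneous equation $(-L)^k u=0$ has the $2k$ power solutions $r^{1+2j}$ and $r^{1-n+2j}$, $j=0,\dots,k-1$, whose exponents are pairwise distinct \emph{precisely because} $k<n/2+1$ (a coincidence $1+2j_1=1-n+2j_2$ forces $n$ even and $k\ge n/2+1$), so no logarithmic solutions intervene. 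I would build $\delta(\cdot,s)$ from the regular-at-the-origin combination of the $r^{1+2j}$ on $[0,s]$ and the appropriate combination of the $r^{1-n+2j}$ on $[s,\infty)$, determining the coefficients by continuity of $\delta,\partial_r\delta,\dots,\partial_r^{2k-2}\delta$ at $r=s$ together with the jump of $\partial_r^{2k-1}\delta$ dictated by the Dirac source $s^{1-n}$. Solving this Wronskian-type system gives $\delta$ in closed form; after factoring out the homogeneity (degree $2k-n$) the two branches should assemble into a Gauss hypergeometric function of $r^2/s^2$ — this is the origin of the hypergeometric functions mentioned in the abstract — and in particular $\delta(r,s)=rs\,\varphi(r,s)$ with $\varphi$ smooth and positive on $D$, so the canonical choice $Q(r)=1/\!\big(r\varphi(0,r)\big)$ of Remark~\ref{canonicalQ} is available.

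\textbf{Step 2: the differential inequalities (the hard part).} With this $Q$, the hypotheses \eqref{hypothesis1}--\eqref{hypothesis2} become the inequalities \eqref{condition1}--\eqref{condition2}, which are statements about values and first $r$-derivatives of the ${}_2F_1$ appearing in $\varphi$. I would prove them with the standard toolkit for hypergeometric functions: the Euler integral representation (for sign control), the contiguous relations (to rewrite $\partial_r\varphi$ in terms of a nearby ${}_2F_1$), the power series (whose coefficients I expect to be positive over the relevant parameter range, yielding the needed monotonicity), and log-convexity/monotonicity in the parameters. The admissible constant $C>0$ in \eqref{condition2} should be extracted from the behavior of the left-hand side as $s\downarrow r$ and as $s\to\infty$. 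Carrying this out uniformly over all parameter values forced by $0\le k<n/2+1$ — equivalently over $n=2k-1$ after the reduction above — is, I expect, the principal obstacle; the cases $k=0,1$ (where $\varphi$ degenerates to elementary functions and the inequalities can be checked by hand, recovering the Camassa--Holm picture) serve as consistency checks.

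\textbf{Step 3: initial data, local existence, conclusion.} It remains to choose $\omega_0\le 0$ smooth with $\omega_0(r)=r\cdot(\text{smooth function of }r^2)$ near $0$ (so that $\omega_0\partial_r$ is a genuine smooth vector field) and with decay at infinity tuned to $k$ so that $u_0:=A^{-1}(\omega_0\partial_r)$, read off from \eqref{intform}, lies in $H^\infty(\mathbb{R}^n,\mathbb{R}^n)$; this reduces to estimating $u_0$ near $0$ and near $\infty$ directly from the explicit kernel $\delta$, and it is the one place where the precise decay rate of $\omega_0$ relative to the size of $k$ must be chosen with care. Local existence of \eqref{gamma1}--\eqref{rho1} for this data then follows from \cite{bauer2024liouville} once $z_0=r^{n-1}\omega_0$ lies in the required weighted $L^1$ space. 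All hypotheses of Theorem~\ref{generalbreakdown} are then in force, so $\rho(T,r)=\gamma_r(T,r)=0$ at some finite $T$, and Proposition~\ref{breakdownmech} upgrades this to blow-up of the $C^1$ norm of $u(T,\cdot)$, which is the assertion of the theorem.
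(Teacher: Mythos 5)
Your overall architecture coincides with the paper's: invert $(-\Delta)^k$ on radial fields to get a kernel $\delta(r,s)=rs\,\varphi_k(r,s)$ built from a Gauss hypergeometric function of $r^2/s^2$, take the canonical comparison function $Q(r)=1/\bigl(r\varphi_k(0,r)\bigr)$ of Remark~\ref{canonicalQ}, verify \eqref{condition1}--\eqref{condition2} by hypergeometric estimates, invoke local existence from \cite{bauer2024liouville} for suitable $\omega_0\le 0$, and conclude via Theorem~\ref{generalbreakdown} and Proposition~\ref{breakdownmech}. The genuinely different ingredient is your construction of the Green function: you propose matching the homogeneous power solutions $r^{1+2j}$, $r^{1-n+2j}$ across $r=s$ (your resonance count, showing logarithms only appear when $k\ge n/2+1$, is correct), whereas Proposition~\ref{greens} proceeds by induction on $k$, composing the $k=1$ kernel with itself and evaluating the resulting integrals through the antiderivative identities of Proposition~\ref{weightderivprop}, Gauss's evaluation at $z=1$, and a contiguous relation. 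Your route is viable and arguably more direct, but note that it does not by itself deliver two things you later need: the identification of the kernel with the specific $F\bigl(1-k,\tfrac n2+1-k;\tfrac n2+1;r^2/s^2\bigr)$, and the fact that this kernel inverts $(-\Delta)^k$ on the decay class $\mathcal{Q}^m_\lambda$ actually used in the Lagrangian formulation; moreover positivity of $\varphi_k$ is not automatic from the Wronskian construction but follows from the explicit lower bound \eqref{hypergeombounds}.

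Two substantive caveats. First, the proposed reduction to the single dimension $n=2k-1$ via Theorem~\ref{breakdowndim} cannot substitute for a uniform-in-$n$ argument for the theorem as stated: the lifted datum is constant in $x_{n+1}$ and therefore lies only in $C^\infty$, not in $H^\infty$ (Remark~\ref{infiniteenergy}), while Theorem~\ref{theorem:maintheorem} requires $u_0\in H^\infty(\mathbb{R}^n,\mathbb{R}^n)$; so your ``equivalently over $n=2k-1$'' is not correct for this statement. Fortunately nothing is gained by the restriction anyway: with $a=1-k$, $b=\tfrac n2+1-k$, $c=\tfrac n2+1$, condition \eqref{condition1Hk} reduces exactly to the sign statement \eqref{decreasinghypergeom} (since $a\le 0$), and condition \eqref{condition2Hk} reduces via \eqref{type3deriv} to $2b\,\Constanta(k,n)\,F(a,b+1;c;r^2/s^2)$, which is bounded below by a positive constant by \eqref{hypergeombounds}, uniformly in the subcritical range. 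Second, this verification is precisely the core of the proof, and in your write-up it remains a plan (``I would prove\dots I expect\dots''); as it stands the key inequalities are asserted, not established, so Step 2 must be carried out explicitly (the paper does it in a few lines once Lemma~\ref{boundslemma} is in hand). A minor slip: for $k=1$ the operator is $-\Delta$, giving a Hunter--Saxton-type model, not Camassa--Holm (which is $1-\Delta$).
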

The remainder of this section will be dedicated to the proof of this result: first, in Section~\ref{sec:hypergeometric} we will collect several results on hypergeometric functions, which we will then use in Section~\ref{sec:greensfunction}, to derive a formula for the Green function of (powers of) the Laplacian. Finally in Section~\ref{sec:blowup_hom} we will apply our comparison breakdown criteria for radial solutions from Section \ref{sec:radialblowup} to establish Theorem~\ref{theorem:maintheorem}.

\subsection{Hypergeometric functions}\label{sec:hypergeometric}
As we will see, the Green function for the homogeneous Sobolev inertia operator naturally takes the form of a hypergeometric function. We therefore collect some basic facts about such functions, see~\cite{andrews1999special} for a more comprehensive treatise. First we introduce the \textit{Pochhammer symbol} (or \textit{rising factorial}) defined by
\[(x)_j:=\prod_{k=0}^{j-1}(x+k)=\frac{\Gamma(x+j)}{\Gamma(x)},\qquad x\in\mathbb{R},\,j\in \mathbb{N}\cup\{0\},\]
where the expression in terms of the gamma function holds as long as $x$ and $x+j$ are not negative integers. Note that whenever we write a gamma function it is implicitly assumed that the argument is not a negative integer. 
The formula above has the obvious consequence that 
\begin{equation}\label{simpleformula}
\frac{(x+1)_j}{(x)_j} = \frac{x+j}{x}, \qquad x\in\mathbb{R}, \, j\in \mathbb{N}\cup\{0\}.
\end{equation}
Using this the \textit{hypergeometric function} $_2F_1(a,b;c;z)$ is then defined for $\vert z\vert<1$ and $c\notin \mathbb{Z^-}$ by the series
    \[_2F_1(a,b;c;z):=\sum_{j=0}^\infty\frac{(a)_j(b)_j}{(c)_j}\frac{z^j}{j!}\]
    and by analytic continuation elsewhere \cite{andrews1999special}. 
    The analytic continuation is defined by an integral representation of $_2F_1(a,b;c;z)$ due originally to Euler \cite{euler1794institutiones}.
For conciseness, we will denote it simply as $F$ from this point forward.

\begin{theorem}[Theorem 2.2.1 in \cite{andrews1999special}]\label{euler}
    If $\operatorname{Re}c>\operatorname{Re}b>0,$ then 
    \begin{equation}\label{eulerintegral}
        F(a,b;c;z)=\frac{\Gamma(c)}{\Gamma(b)\Gamma(c-b)}\int_0^1t^{b-1}(1-t)^{c-b-1}(1-zt)^{-a}\,dt
    \end{equation}
    in the $z$-plane with branch cuts at 1 and $\infty$. 
\end{theorem}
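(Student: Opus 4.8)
The plan is to prove the identity first on the disk $|z|<1$ by a term-by-term computation and then extend to the cut plane by analytic continuation. First I would assume $|z|<1$ and expand the factor $(1-zt)^{-a}$ as its generalized binomial series $\sum_{j=0}^\infty \frac{(a)_j}{j!}(zt)^j$, which converges uniformly for $t\in[0,1]$ once $|z|<1$. Substituting this into the right-hand integral and interchanging summation and integration---justified by the uniform convergence on $[0,1]$---reduces the integral to a sum of Beta integrals $\int_0^1 t^{b+j-1}(1-t)^{c-b-1}\,dt$.

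Each such integral equals $B(b+j,c-b)=\Gamma(b+j)\Gamma(c-b)/\Gamma(c+j)$, and here the hypotheses $\operatorname{Re}c>\operatorname{Re}b>0$ are exactly what guarantee its convergence: integrability at $t=0$ follows from $\operatorname{Re}b>0$ and at $t=1$ from $\operatorname{Re}(c-b)>0$. Assembling the series and using the Pochhammer identities $\Gamma(b+j)/\Gamma(b)=(b)_j$ and $\Gamma(c)/\Gamma(c+j)=1/(c)_j$, the prefactor $\Gamma(c)/[\Gamma(b)\Gamma(c-b)]$ cancels the stray gamma factors and one recovers precisely $\sum_{j=0}^\infty \frac{(a)_j(b)_j}{(c)_j}\frac{z^j}{j!}=F(a,b;c;z)$. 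This establishes the identity on $|z|<1$.

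Finally, to obtain the result throughout the cut plane I would argue that both sides are analytic there and invoke the identity theorem. The right-hand integral defines an analytic function of $z$ on $\mathbb{C}\setminus[1,\infty)$: for such $z$ and $t\in[0,1]$ the quantity $1-zt$ avoids the branch cut, so $(1-zt)^{-a}$ is a well-defined analytic branch, the integrand is jointly continuous and analytic in $z$, and analyticity of the integral then follows from Morera's theorem combined with Fubini to interchange a small contour integral with the $t$-integral. Since this analytic extension agrees with the power series $F(a,b;c;z)$ on the open disk $|z|<1$, the two analytic functions coincide on the connected cut plane. The main obstacle is the care required in the continuation step---fixing a consistent branch of $(1-zt)^{-a}$ as $t$ ranges over $[0,1]$ and verifying analyticity of the resulting integral---whereas the term-by-term identity on the disk is a routine Beta-integral computation.
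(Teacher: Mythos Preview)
The paper does not give its own proof of this theorem; it simply quotes the result as Theorem~2.2.1 from Andrews--Askey--Roy and uses it as a black box. Your argument is exactly the classical proof one finds in that reference: expand $(1-zt)^{-a}$ by the binomial series for $|z|<1$, integrate term by term against the Beta weight, and then extend to the cut plane by analytic continuation. So your proposal is correct and matches the standard proof the paper is implicitly relying on.

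One small technical remark: when you justify the interchange of sum and integral by ``uniform convergence on $[0,1]$,'' note that the weight $t^{b-1}(1-t)^{c-b-1}$ may blow up at the endpoints, so uniform convergence of the series alone is not quite enough. The clean justification is that the series for $(1-zt)^{-a}$ is uniformly bounded on $t\in[0,1]$ for fixed $|z|<1$, and the weight is integrable by the hypothesis $\operatorname{Re}c>\operatorname{Re}b>0$; dominated convergence then does the job. This is a minor wording issue, not a gap in the argument.
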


Euler's integral representation yields the following expression for $F$ when $z=1$, due originally to Gauss \cite{gauss1812}.

\begin{theorem}[Theorem 2.2.2 in \cite{andrews1999special}]\label{gauss}
    For $\operatorname{Re}(c-a-b)>0$, it holds that
    \[F(a,b;c;1)=\frac{\Gamma(c)\Gamma(c-a-b)}{\Gamma(c-a)\Gamma(c-b)}.\]
\end{theorem}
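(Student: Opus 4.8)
The plan is to derive Gauss's summation formula directly from Euler's integral representation (Theorem~\ref{euler}) by specializing to the boundary point $z=1$ and recognizing the resulting integral as a Beta function. First I would work under the auxiliary hypotheses $\operatorname{Re}c>\operatorname{Re}b>0$ together with the standing assumption $\operatorname{Re}(c-a-b)>0$, so that Theorem~\ref{euler} applies and all integrals below converge; the auxiliary constraint will be removed at the end by analytic continuation.

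Under these hypotheses, the key steps are as follows. Setting $z=1$ in \eqref{eulerintegral} collapses the two powers of $(1-t)$, since $(1-t)^{c-b-1}(1-t)^{-a}=(1-t)^{c-a-b-1}$, yielding formally
\begin{equation*}
F(a,b;c;1)=\frac{\Gamma(c)}{\Gamma(b)\Gamma(c-b)}\int_0^1 t^{b-1}(1-t)^{c-a-b-1}\,dt.
\end{equation*}
The remaining integral is exactly the Beta integral $B(p,q)=\int_0^1 t^{p-1}(1-t)^{q-1}\,dt$ with $p=b$ and $q=c-a-b$, which converges precisely because $\operatorname{Re}b>0$ and $\operatorname{Re}(c-a-b)>0$, and evaluates to $\Gamma(b)\Gamma(c-a-b)/\Gamma(c-a)$. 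Substituting this and cancelling the factor $\Gamma(b)$ gives
\begin{equation*}
F(a,b;c;1)=\frac{\Gamma(c)}{\Gamma(c-b)}\cdot\frac{\Gamma(c-a-b)}{\Gamma(c-a)}=\frac{\Gamma(c)\Gamma(c-a-b)}{\Gamma(c-a)\Gamma(c-b)},
\end{equation*}
which is the claimed identity.

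The step that needs genuine care — and which I expect to be the main obstacle — is justifying the passage to $z=1$. Rather than substituting $z=1$ blindly into the integral, I would take the limit $z\to 1^-$: the integrand $t^{b-1}(1-t)^{c-b-1}(1-zt)^{-a}$ is dominated uniformly in $z\in[1/2,1]$ by an integrable function (using $\operatorname{Re}(c-a-b)>0$ to control the behavior near $t=1$), so dominated convergence allows passing the limit inside the integral to obtain the Beta integral. On the series side, the hypergeometric series converges absolutely at $z=1$ exactly when $\operatorname{Re}(c-a-b)>0$, so by Abel's theorem $F(a,b;c;1)=\lim_{z\to 1^-}F(a,b;c;z)$, matching the two limits and legitimizing the formal computation above.

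Finally I would remove the auxiliary constraint $\operatorname{Re}c>\operatorname{Re}b>0$ by analytic continuation in the parameters. Both sides of the identity are holomorphic in $(a,b,c)$ throughout the region $\operatorname{Re}(c-a-b)>0$ (away from the poles of the gamma factors): the right-hand side is a ratio of gamma functions, and the left-hand side is holomorphic there because the series $\sum_j (a)_j(b)_j/((c)_j\,j!)$ converges locally uniformly in the parameters on this region. Since the two sides agree on the nonempty open set $\{\operatorname{Re}c>\operatorname{Re}b>0\}\cap\{\operatorname{Re}(c-a-b)>0\}$, the identity theorem extends the equality to all of $\{\operatorname{Re}(c-a-b)>0\}$, completing the proof.
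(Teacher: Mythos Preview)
The paper does not actually prove this theorem: it is stated as background material and cited as Theorem~2.2.2 in \cite{andrews1999special}, with only the one-line remark that ``Euler's integral representation yields the following expression for $F$ when $z=1$, due originally to Gauss.'' There is therefore no proof in the paper to compare against.

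That said, your argument is correct and is exactly the standard proof (and the one the paper's remark gestures at): specialize Euler's integral \eqref{eulerintegral} to $z=1$, recognize the Beta integral $B(b,c-a-b)=\Gamma(b)\Gamma(c-a-b)/\Gamma(c-a)$, and cancel. Your dominated-convergence justification for passing to $z=1$ and the analytic-continuation step to drop the auxiliary hypothesis $\operatorname{Re}c>\operatorname{Re}b>0$ are the right ingredients. One small sharpening you might make explicit: for the dominating function on $[1/2,1]\ni z$, the bound
\[
|t^{b-1}(1-t)^{c-b-1}(1-zt)^{-a}| \le t^{\operatorname{Re}b-1}(1-t)^{\operatorname{Re}(c-b)-1}\max\big\{1,(1-t)^{-\operatorname{Re}a}\big\}
\]
is integrable precisely because $\operatorname{Re}b>0$ and $\operatorname{Re}(c-a-b)>0$, which is the condition you are assuming.
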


We now observe what happens if $a$ is a negative integer, i.e., $a=-m$ for $m\in\mathbb{N}$. In this case, the Pochhammer symbol $(a)_j$ vanishes for $j>m$, and hence the series terminates at $j=m$. Note also that 
\[(-m)_j=(-1)^j\,j!\binom{m}{j},\]
which yields the following result:
\begin{corollary} 
\label{truncatedhyper}
 If $a=-m$ for $m\in\mathbb{N}$, the hypergeometric function $F(a,b;c;z)$ reduces to the polynomial
 \[F(-m,b;c;z):=\sum_{j=0}^m(-1)^j\binom{m}{j}\frac{(b)_j}{(c)_j}z^j.\]
\end{corollary}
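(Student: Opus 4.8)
The plan is to simply unwind the definition of the hypergeometric series and use the two facts collected immediately before the statement, namely that $(-m)_j$ vanishes for $j>m$ and that $(-m)_j=(-1)^j\,j!\binom{m}{j}$ for $0\le j\le m$.

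First I would observe that, since $a=-m$ with $m\in\mathbb{N}$, the Pochhammer symbol $(-m)_j=\prod_{k=0}^{j-1}(-m+k)$ contains the factor $-m+k=0$ whenever $j\ge m+1$ (take $k=m$), so $(a)_j=0$ for every $j>m$. Hence the defining series
\[
F(-m,b;c;z)=\sum_{j=0}^\infty\frac{(-m)_j(b)_j}{(c)_j}\frac{z^j}{j!}
\]
truncates after the term $j=m$ and is therefore a polynomial in $z$ of degree at most $m$; in particular it is an entire function, so no question of analytic continuation across the branch cuts of Theorem~\ref{euler} arises, and the polynomial expression is valid for all $z\in\mathbb{C}$. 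Next I would substitute $(-m)_j=(-1)^j\,j!\binom{m}{j}$ into the surviving terms, which cancels the $j!$ in the denominator and yields exactly
\[
F(-m,b;c;z)=\sum_{j=0}^m(-1)^j\binom{m}{j}\frac{(b)_j}{(c)_j}z^j,
\]
as claimed. The only point worth a remark is that $(c)_j\ne 0$ for $0\le j\le m$, which holds under the standing assumption $c\notin\mathbb{Z}^-$ (and in fact we only need $c\notin\{0,-1,\dots,-(m-1)\}$), so every coefficient is well defined.

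There is essentially no obstacle here: the statement is a direct consequence of the series definition together with the elementary identity for $(-m)_j$, and the argument is a two-line computation. The only thing to be careful about is bookkeeping — making sure the index where $(-m)_j$ first vanishes is $j=m+1$ and not $j=m$, so that the term $j=m$ is retained — and noting that the resulting finite sum, being a polynomial, coincides with the analytically continued $F$ everywhere.
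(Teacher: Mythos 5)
Your proposal is correct and follows essentially the same argument the paper gives (in the text immediately preceding the corollary): the Pochhammer symbol $(-m)_j$ vanishes for $j>m$, so the defining series truncates, and substituting $(-m)_j=(-1)^j\,j!\binom{m}{j}$ yields the stated polynomial. Your added remarks about $(c)_j\ne 0$ and the validity of the polynomial for all $z$ are fine but go beyond what the paper records.
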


The following derivative formulas quickly yield antiderivative formulas used in Proposition \ref{greens} to derive the Green function for the inverse of $(-\Delta)^k$ on radial vector fields. 
\begin{proposition}\label{weightderivprop}
    For any real parameters $a$, $b$, $c$, and any complex $z$, we have 
    \begin{align}
        \frac{d}{dz}\big[ z^c F(a,b;c+1;z)\big] &= c z^{c-1} F(a,b;c;z), \label{type1deriv} \\
        \frac{d}{dz} \big[ z^{a-1} F(a-1,b;c;z)\big] &= (a-1) z^{a-2} F(a,b;c;z), \label{type2deriv}
        \\
        \frac{d}{dz} \big[ z^{b-1} F(a,b-1;c;z)\big] &= (b-1) z^{b-2} F(a,b;c;z). \label{type3deriv}
    \end{align}
\end{proposition}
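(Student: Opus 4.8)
The plan is to verify all three identities by direct term-by-term differentiation of the defining power series, reducing each to the Pochhammer identity \eqref{simpleformula} (equivalently \eqref{simpleformula}'s consequence $(x+1)_j/(x)_j = (x+j)/x$), and then extend to all complex $z$ by analytic continuation. Since each claimed identity is an equality of functions that are analytic in $z$ on the unit disc (where the series converge absolutely), it suffices to establish it there; the general case follows because both sides are analytic continuations of the same holomorphic germ, using the integral representation of Theorem~\ref{euler}.

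For \eqref{type1deriv}, I would write $z^c F(a,b;c+1;z) = \sum_{j=0}^\infty \frac{(a)_j(b)_j}{(c+1)_j}\frac{z^{c+j}}{j!}$ and differentiate termwise to get $\sum_{j=0}^\infty \frac{(a)_j(b)_j}{(c+1)_j}\frac{(c+j)z^{c+j-1}}{j!}$. The key algebraic step is that $\frac{c+j}{(c+1)_j} = \frac{c}{(c)_j}$, which is exactly \eqref{simpleformula} rearranged as $(c+1)_j = \frac{c+j}{c}(c)_j$; substituting this gives $c z^{c-1}\sum_{j=0}^\infty \frac{(a)_j(b)_j}{(c)_j}\frac{z^j}{j!} = cz^{c-1}F(a,b;c;z)$, as desired. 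For \eqref{type2deriv}, differentiating $z^{a-1}F(a-1,b;c;z) = \sum_{j=0}^\infty \frac{(a-1)_j(b)_j}{(c)_j}\frac{z^{a-1+j}}{j!}$ produces the factor $(a-1+j)$, and the needed identity is $(a-1+j)(a-1)_j = (a-1)(a)_j$, which follows since $(a-1+j)(a-1)_j = (a-1)_{j+1} = (a-1)(a)_j$ by splitting off the first factor of the rising factorial. The identity \eqref{type3deriv} is identical to \eqref{type2deriv} after exchanging the roles of $a$ and $b$, using the symmetry $F(a,b;c;z) = F(b,a;c;z)$ evident from the series, so no separate computation is needed.

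I do not expect any genuine obstacle here: each identity collapses to a one-line manipulation of rising factorials, and the only subtlety is the justification of termwise differentiation, which is standard since power series may be differentiated term by term within their radius of convergence. The one point worth stating carefully is the passage to complex $z$ and to parameter values where the series does not converge (e.g.\ $c$ a nonpositive integer on one side): there one invokes that $z^c F(a,b;c+1;z)$ and $z^{c-1}F(a,b;c;z)$ are analytic continuations of the disc germs via Theorem~\ref{euler}, so the identity of germs forces the identity of the continuations on the common domain (the $z$-plane slit along $[1,\infty)$, with an additional branch point at $0$ coming from the noninteger powers $z^c$, $z^{a-1}$, $z^{b-1}$).
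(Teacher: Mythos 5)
Your proposal is correct and follows essentially the same route as the paper: term-by-term differentiation of the hypergeometric series, reduction to the Pochhammer identity \eqref{simpleformula}, and obtaining \eqref{type3deriv} from \eqref{type2deriv} by the $a\leftrightarrow b$ symmetry of $F$. The only difference is that you spell out the analytic-continuation step for general complex $z$, which the paper leaves implicit.
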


\begin{proof}
For \eqref{type1deriv}, we have 
$$ \frac{d}{dz}\big[z^c F(a,b;c+1;z)\big] 
        = \sum_{j=0}^{\infty} \frac{(a)_j (b)_j}{j!} \, \frac{(c+j) z^{c+j-1}}{(c+1)_j},
        $$ and the result follows immediately from 
        \eqref{simpleformula}. 
 
  For \eqref{type2deriv}, we compute 
    $$ \frac{d}{dz} \big[ z^{a-1} F(a-1,b;c;z)\big] = \sum_{j=0}^{\infty} \frac{(b)_j}{(c)_j j!} \, (a+j-1) (a-1)_j \, z^{a+j-2},
$$   
and the formula follows from $(a+j-1)(a-1)_j = (a-1) (a)_j$, which is just a restatement of \eqref{simpleformula} for $x=a-1$. 
Formula \eqref{type3deriv} is a consequence of \eqref{type2deriv} since $F$ is invariant under switching $a$ and $b$. 
\end{proof}

Gauss defined two hypergeometric functions to be \textit{contiguous} if they are power-series in the same variable, if two of the parameters $\{a,b,c\}$ are pairwise equal, and if the third pair of parameters differ by 1. He showed \cite{gauss1812} that a hypergeometric function can always be written as a linear combination of any two others contiguous to it, where the coefficients are rational functions of $a,b,c,$ and $z$. 
One of these contiguous relations will be especially important later in the proof of Proposition \ref{greens}, so we derive it here. 
\begin{corollary}\label{contiguouscorollary}
    For any real parameters $a$, $b$, $c$ and any complex $z$, we have     
    \[(a-1)\,F(a,b-1;c;z)-(b-1)\,F(a-1,b;c;z)=(a-b)\,F(a-1,b-1;c;z).\]
\end{corollary}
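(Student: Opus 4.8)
We need to show
$$(a-1)\,F(a,b-1;c;z)-(b-1)\,F(a-1,b;c;z)=(a-b)\,F(a-1,b-1;c;z).$$

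The cleanest approach is to compare the three hypergeometric series coefficient-by-coefficient, since all three are power series in the same variable $z$ with radius of convergence $1$ (and the identity then extends to all complex $z$ by analytic continuation). So I would fix $j\ge 0$ and extract the coefficient of $z^j/j!$ from each side.

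Let me set this up. The coefficient of $z^j/j!$ in $F(\alpha,\beta;c;z)$ is $(\alpha)_j(\beta)_j/(c)_j$. Thus on the left-hand side the coefficient of $z^j/j!$ is
$$\frac{1}{(c)_j}\Big[(a-1)(a)_j(b-1)_j-(b-1)(a-1)_j(b)_j\Big],$$
while on the right it is $\frac{1}{(c)_j}(a-b)(a-1)_j(b-1)_j$. So it suffices to prove the purely algebraic identity, for each $j$,
$$(a-1)(a)_j(b-1)_j-(b-1)(a-1)_j(b)_j=(a-b)(a-1)_j(b-1)_j.$$

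To verify this I would use the restatement of \eqref{simpleformula} already noted in the proof of Proposition~\ref{weightderivprop}, namely $(x-1)(x)_j=(x+j-1)(x-1)_j$ (equivalently $(a-1)(a)_j=(a+j-1)(a-1)_j$). Applying it to the first term with $x=a$ and to the second with $x=b$, the left-hand side becomes
$$(a+j-1)(a-1)_j(b-1)_j-(b+j-1)(a-1)_j(b-1)_j=(a-1)_j(b-1)_j\big[(a+j-1)-(b+j-1)\big],$$
and the bracket collapses to $a-b$, which is exactly the right-hand side. Summing back over $j$ with the weights $z^j/\big(j!\,(c)_j\big)$ gives the claimed identity for $|z|<1$, and then for all complex $z$ by analytic continuation (for $c\notin\mathbb{Z}^-$; the identity is understood in the usual sense when $c$ is a negative integer and one side terminates). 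There is no real obstacle here: the only thing to be a little careful about is invoking \eqref{simpleformula} in the form $(x-1)(x)_j=(x+j-1)(x-1)_j$, valid for all real $x$ and $j\ge 0$ as a polynomial identity, so no division is actually needed and no exceptional parameter values arise.
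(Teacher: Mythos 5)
Your proof is correct, but it follows a different route from the paper. The paper derives the identity from the antiderivative formulas of Proposition~\ref{weightderivprop}: replacing $b$ by $b-1$ in \eqref{type2deriv} and $a$ by $a-1$ in \eqref{type3deriv}, each gives an expression for $z\,\frac{d}{dz}F(a-1,b-1;c;z)$ in terms of two contiguous functions, and subtracting the two expressions yields the relation immediately -- so the contiguous relation falls out of machinery that the paper has already built and needs anyway for Proposition~\ref{greens}. You instead compare coefficients of $z^j/j!$ directly and reduce the claim to the Pochhammer identity $(x-1)(x)_j=(x+j-1)(x-1)_j$, which is indeed just the rearranged form of \eqref{simpleformula}; your algebra checks out, and summing back over $j$ gives the identity for $|z|<1$, hence everywhere by analytic continuation (with $c\notin\mathbb{Z}^-$, as the paper assumes throughout). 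What your approach buys is self-containedness and transparency about parameter values: the verification is a polynomial identity in $a,b$ with no divisions, so no exceptional cases arise. What the paper's approach buys is economy -- it reuses \eqref{type2deriv}--\eqref{type3deriv} verbatim rather than returning to the series definition, at the cost of routing a purely algebraic fact through differentiation. Either argument is acceptable.
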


\begin{proof} 
    Replacing $b$ with $(b-1)$ in \eqref{type2deriv}, the product rule and a cancellation gives  
    $$ z\, \tfrac{d}{dz} F(a-1,b-1;c;z) = (a-1) F(a,b-1;c;z) - (a-1) F(a-1,b-1;c;z). $$
    Similarly replacing $a$ with $(a-1)$ in \eqref{type3deriv} yields 
    $$ z\,\tfrac{d}{dz} F(a-1,b-1;c;z) = (b-1) F(a-1,b;c;z) - (b-1) F(a-1,b-1;c;z). $$
    Subtracting the two gives the desired formula.
 \end{proof}

The following bounds are an easy consequence of Euler's formula in Theorem \ref{euler}. They are crucial in the final proof of Theorem \ref{theorem:maintheorem}.

\begin{lemma}\label{boundslemma}
If $a\le 0$ with $c>b>0$ and $0\le z\le 1$, 
then $F(a,b;c;z)$ satisfies 
\begin{equation}\label{decreasinghypergeom}
\frac{d}{dz} F(a,b;c;z) \le 0
\end{equation}
and 
\begin{equation}\label{hypergeombounds}
0< \frac{\Gamma(c) \Gamma(c-b-a)}{\Gamma(c-b)\Gamma(c-a)} \le F(a,b;c;z) \le 1.
\end{equation}
\end{lemma}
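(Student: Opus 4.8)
The plan is to use Euler's integral representation from Theorem~\ref{euler} to handle both claims at once. Since $c>b>0$, we may write
\[
F(a,b;c;z)=\frac{\Gamma(c)}{\Gamma(b)\Gamma(c-b)}\int_0^1 t^{b-1}(1-t)^{c-b-1}(1-zt)^{-a}\,dt,
\]
and the normalizing constant in front is positive, as is the weight $t^{b-1}(1-t)^{c-b-1}$ on $(0,1)$. The key observation is that for $0\le z\le 1$ and $0\le t\le 1$ we have $0\le zt\le 1$, so $1-zt\in[0,1]$; since $-a\ge 0$, the factor $(1-zt)^{-a}$ lies in $[0,1]$ and is nonincreasing in $z$ (for each fixed $t$). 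This single monotonicity fact drives everything.

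First I would prove \eqref{decreasinghypergeom}: differentiating under the integral sign (justified since $a\le 0$ makes the integrand and its $z$-derivative bounded on $[0,1]\times[0,1-\varepsilon]$, and one can pass to $z=1$ by continuity, or simply note the derivative formula) gives
\[
\frac{d}{dz}F(a,b;c;z)=\frac{\Gamma(c)}{\Gamma(b)\Gamma(c-b)}\int_0^1 t^{b-1}(1-t)^{c-b-1}\,(-a)\,t\,(1-zt)^{-a-1}\,dt,
\]
and every factor in the integrand is nonnegative \emph{except} $-a\le 0$; hence the whole integral is $\le 0$. Alternatively one can invoke \eqref{type1deriv} together with positivity of $F(a,b;c;z)$, but the direct differentiation is cleanest and also establishes that $F$ is weakly decreasing, not merely that its derivative at isolated points is nonpositive.

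Next, for the bounds \eqref{hypergeombounds}: monotonic decrease on $[0,1]$ means $F(a,b;c;1)\le F(a,b;c;z)\le F(a,b;c;0)$. The right endpoint is immediate, $F(a,b;c;0)=1$. For the left endpoint, since $c>b>0$ and $c-a-b>c-b>0$ (because $a\le 0$), the hypothesis $\operatorname{Re}(c-a-b)>0$ of Theorem~\ref{gauss} is satisfied, so
\[
F(a,b;c;1)=\frac{\Gamma(c)\Gamma(c-a-b)}{\Gamma(c-a)\Gamma(c-b)},
\]
which is exactly the claimed lower bound. Strict positivity of this quantity follows because all four gamma-function arguments $c,\ c-a-b,\ c-a,\ c-b$ are strictly positive (using $c>b>0$, $a\le 0$), and $\Gamma$ is positive on $(0,\infty)$.

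There is essentially no hard part here; the only point requiring a word of care is the justification of differentiation under the integral and the continuity of $F$ up to $z=1$ when $a\le 0$ (the integrand $(1-zt)^{-a}$ and its $z$-derivative $(-a)t(1-zt)^{-a-1}$ are continuous and bounded on $[0,1]_t\times[0,1]_z$ precisely because the exponents $-a$ and $-a-1$ behave well: if $-a\ge 1$ there is no issue at all, and if $0\le -a<1$ the possible singularity of $(1-zt)^{-a-1}$ at $t=z=1$ is integrable against the weight $(1-t)^{c-b-1}$ after noting $zt$ approaches $1$ only as $t\to1$). This is the step I would write out most carefully, though it is routine; the algebraic identities are all supplied by Theorems~\ref{euler} and~\ref{gauss}.
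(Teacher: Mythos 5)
Your proposal is essentially the paper's own proof: differentiate Euler's integral representation to see that $\frac{d}{dz}F(a,b;c;z)$ is a nonnegative integral multiplied by the nonpositive parameter, then combine the resulting monotonicity with $F(a,b;c;0)=1$ and Gauss's evaluation at $z=1$ for the two-sided bound. One sign slip to fix: differentiating $(1-zt)^{-a}$ in $z$ gives the factor $a\,t\,(1-zt)^{-a-1}$, not $(-a)\,t\,(1-zt)^{-a-1}$, and the relevant inequality is $a\le 0$ rather than ``$-a\le 0$'' (which is false for $a<0$); your two slips cancel, so the stated conclusion $\frac{d}{dz}F\le 0$ is unaffected, but as written both the formula and the justification are wrong by a sign. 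Also, your closing remark that the possible singularity of $(1-zt)^{-a-1}$ at $t=z=1$ is ``integrable against the weight $(1-t)^{c-b-1}$'' is not true in general (near $t=1$ the integrand at $z=1$ behaves like $(1-t)^{c-a-b-2}$, which fails to be integrable when $c-a-b\le 1$); this is harmless, since the bounds only require monotonicity on $[0,1)$ together with continuity of $F$ at $z=1$ (or directly Gauss's theorem, applicable because $c-a-b\ge c-b>0$), so you should simply drop the endpoint-derivative discussion rather than rely on that integrability claim.
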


\begin{proof}
    For the derivative bound, we differentiate Euler's integral \eqref{eulerintegral} and get 
    $$ \frac{d}{dz} F(a,b;c;z) = 
    \frac{a\Gamma(c)}{\Gamma(b)\Gamma(c-b)}\int_0^1 t^{b}(1-t)^{c-b-1}(1-zt)^{-a-1}\,dt.
    $$
    Since $c>b>0$ we know all the $\Gamma$ terms are positive. The integrand is also obviously positive since $0\le z\le 1$, and thus the factor of $a$ makes the entire thing nonpositive.

    For \eqref{hypergeombounds}, we simply use the fact that $F$ is decreasing on the interval to conclude that 
    $$ F(a,b;c;1)\le F(a,b;c;z)\le F(a,b;c;0),$$
    and the fact that $F(a,b;c;0)=1$ together with Gauss' formula from Theorem \ref{gauss}.
\end{proof}

\subsection{Green function for Laplace operators acting on radial vector fields}\label{sec:greensfunction}

To invert the integer-order homogeneous Sobolev inertia operator $A=(-\Delta)^k$, we will impose certain decay conditions as $r\to\infty.$ For $\lambda\in\mathbb{R},$ denote by $\mathcal{X}_{\lambda}$ the space of functions that decay at infinity like a power $r^{-\lambda}$:
\[\mathcal{X}_\lambda\left(\mathbb{R}_{\ge0},\mathbb{R}\right):=\left\{u:\mathbb{R}_{\ge0}\to\mathbb{R}\,\vert \,\limsup_{r\to\infty}\,r^{\lambda}\vert u(r)\vert <\infty\right\}.\]
In addition, we assume the vector fields $U=u(r)\partial_r$ can be extended to smooth vector fields on $\mathbb{R}^n,$ which leads to the condition that all even derivatives at $r=0$ vanish. Hence we define
\[W^{m,1}_{\text{odd}}\left(\mathbb{R}_{\ge0},\mathbb{R}\right):=\left\{ u\in W^{m,1}\left(\mathbb{R},\mathbb{R}\right)\,\vert\,u^{(2k)}(0)=0,\,0\le2k<m\right\},\]
where $W^{m,1}$ is the usual Sobolev space of locally integrable functions with locally integrable weak derivatives up to order $m$. We then take the intersection of these two spaces as the domain for the radial Laplace operator. That is,
\[\mathcal{Q}_{\lambda}^m\left(\mathbb{R}_{\ge0},\mathbb{R}\right):=\mathcal{X}_\lambda\left(\mathbb{R}_{\ge0},\mathbb{R}\right)\cap W^{m,1}_{\text{odd}}\left(\mathbb{R}_{\ge0},\mathbb{R}\right).\]
This functional setting allows us to iterate the solution formula for $-\Delta(u(r)\partial_r)=\omega(r)\partial_r$, which yields the following result concerning the Green function for the higher-order homogeneous operator.

\begin{proposition}\label{greens}
 Let $k\in\mathbb{N}$ with $2(k-1)<\lambda<n$.  Given any $\omega\in\mathcal{Q}_{2k+\lambda-1}^0\left(\mathbb{R}_{\ge0},\mathbb{R}\right),$ there exists a unique solution $u\in\mathcal{Q}_{\lambda-1}^{2k}\left(\mathbb{R}_{\ge0},\mathbb{R}\right)$ of $(-\Delta)^k\left(u(r)\partial_r\right)=\omega(r)\,\partial_r$ that takes the form 
\begin{equation}\label{solutionformula}
u(r)=\int_0^\infty K_k(r,s)s^{n-1}\omega(s)\,ds,\qquad K_k(r,s)=\delta_k(\min\{r,s\},\max\{r,s\}),
\end{equation} 
where $\delta_k(r,s)=rs\,\varphi_k(r,s),$ and $\varphi_k$ is defined on $D=\{(r,s)\,\vert\,s\ge r>0\}\subset\mathbb{R}^2$ and given by
\begin{multline}\label{varphik}
\varphi_k(r,s)= \Constanta(k,n) s^{2k-2-n} 
\, F\left(1-k,\frac{n}{2}+1-k;\frac{n}{2}+1;\frac{r^2}{s^2}\right), \\
\text{where} \quad \Constanta(k,n):= \frac{\Gamma\left(\frac{n}{2}+1-k\right)}{2^{2k-1}(k-1)!\, \Gamma\left(\frac{n}{2}+1\right)}.
\end{multline}
\end{proposition}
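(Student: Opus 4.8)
\textbf{Proof proposal for Proposition~\ref{greens}.}

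The plan is to induct on $k$, with the base case $k=1$ corresponding to the ordinary radial Laplace operator. For $k=1$, one solves the ODE $-\Delta(u(r)\partial_r) = \omega(r)\partial_r$, i.e., $-(u'' + \frac{n-1}{r}u' - \frac{n-1}{r^2}u) = \omega$, explicitly. The homogeneous solutions are $r$ and $r^{1-n}$, so variation of parameters gives an integral kernel; imposing regularity at $r=0$ (to kill the $r^{1-n}$ piece near the origin, using that $u$ is odd and vanishes there) and decay at infinity (the $\mathcal{X}_{\lambda-1}$ condition, which requires $\lambda < n$ so that the $r$-growth is suppressed in the exterior integral) picks out the unique Green function. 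A direct computation should show that the resulting kernel is $\delta_1(r,s) = rs\,\varphi_1(r,s)$ with $\varphi_1(r,s) = C(1,n)\,s^{-n}$, matching \eqref{varphik} since $F(0,\cdot\,;\cdot\,;\cdot) \equiv 1$ and $C(1,n) = 1/(n\cdot\text{something})$ — I would verify the constant carefully against the normalization $\Gamma(n/2)/(2\,0!\,\Gamma(n/2+1)) = 1/n$.

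For the inductive step, suppose the formula holds for $k-1$; then $(-\Delta)^k u = \omega$ means $(-\Delta)^{k-1}\big((-\Delta)u\big) = \omega$, so first apply the inductive hypothesis to get $(-\Delta)(u(r)\partial_r) = v(r)\partial_r$ where $v(r) = \int_0^\infty K_{k-1}(r,s)s^{n-1}\omega(s)\,ds$, and then apply the $k=1$ Green function once more to recover $u$ from $v$. This produces $\delta_k$ as an iterated integral of $\delta_1$ against $\delta_{k-1}$, and the crux of the proof is to evaluate this composition in closed form and recognize it as the claimed hypergeometric expression. The antiderivative formulas of Proposition~\ref{weightderivprop} are clearly designed exactly for this: when one integrates $\delta_1(\cdot,\cdot)\,t^{n-1}\,\delta_{k-1}(\cdot,\cdot)\,dt$ over the appropriate range, the integrand is (a power of the integration variable) times a hypergeometric function of (that variable)${}^2/(\text{outer variable})^2$, and \eqref{type1deriv}–\eqref{type3deriv} let one write down the antiderivative with the parameters shifting from $(2-k, n/2+2-k; n/2+1)$ to $(1-k, n/2+1-k; n/2+1)$ after the substitution $z = t^2/s^2$. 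I expect the contiguous relation in Corollary~\ref{contiguouscorollary} to be needed to combine the contributions from the inner integral $\int_0^{\min}$ and the outer integral $\int_{\max}^\infty$ (which produce hypergeometric terms with slightly different parameters) into the single clean $F(1-k, n/2+1-k; n/2+1; r^2/s^2)$, and tracking the multiplicative constant through the Pochhammer/gamma bookkeeping will pin down $C(k,n)$ and the factor $2^{2k-1}(k-1)!$.

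The main obstacle will be the bookkeeping in this hypergeometric integral evaluation: there are two regimes ($r<s$ and $r>s$, equivalently the two pieces of the $\min/\max$ kernel), each inner integral splits further according to where the intermediate variable lies relative to $r$ and $s$, and one must check that all the boundary terms from the antiderivative formulas either vanish (by the regularity/decay conditions built into $\mathcal{Q}^m_\lambda$, which is why the hypotheses $2(k-1) < \lambda < n$ appear — they guarantee convergence of the iterated integrals and vanishing of boundary contributions) or assemble correctly. Uniqueness is comparatively routine: any two solutions in $\mathcal{Q}^{2k}_{\lambda-1}$ differ by an element of the kernel of $(-\Delta)^k$ that satisfies the odd-regularity and decay constraints, and since the only radial polyharmonic fields are spanned by $r^{2j+1}$ and $r^{2j+1-n}$ type terms, the decay condition $\lambda - 1 > 0$ together with $\lambda < n$ and the regularity at the origin forces the difference to be zero. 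I would state uniqueness as a short lemma and dispatch it first, then spend the bulk of the argument on establishing the explicit formula by the induction sketched above.
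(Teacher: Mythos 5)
Your proposal follows essentially the same route as the paper: induction on $k$ with the explicit $k=1$ kernel as base case, the inductive step via composition of $K_1$ with the previous kernel (the paper composes in the opposite order, which is equivalent), splitting the composition integral by regions and evaluating it with the antiderivative formulas of Proposition~\ref{weightderivprop}, Gauss's value $F(a,b;c;1)$ to handle the non-vanishing boundary terms, and the contiguous relation of Corollary~\ref{contiguouscorollary} to collapse the result into a single hypergeometric term with the constant $C(k+1,n)$. The only substantive difference is that you propose to prove uniqueness directly from the radial polyharmonic kernel, whereas the paper simply cites existence and uniqueness in $\mathcal{Q}^{2k}_{\lambda-1}$ from its predecessor \cite{bauer2024liouville}; your sketch of that step is plausible and the rest matches the paper's argument.
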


\begin{remark}
   Since $k\in\mathbb{N}$ in the previous proposition, the hypergeometric function in the expression for $\varphi_k(r,s)$ takes the form of a polynomial in the variable $z=r^2/s^2$ by Corollary \ref{truncatedhyper}. This leads, for example, to the formulae
   \begin{align*}
        \varphi_1(r,s)&=\frac{s^{-n}}{n}\\
        \varphi_2(r,s)&=\frac{s^{-n}}{2n}\Big(\frac{s^2}{n-2}-\frac{r^2}{n+2}\Big)\\
       \varphi_3(r,s)&=\frac{s^{-n}}{8n}\Big(\frac{s^4}{(n-4)(n-2)}-\frac{2r^2s^2}{(n-2)(n+2)}+\frac{r^4}{(n+2)(n+4)}\Big),\\
       \varphi_4(r,s)&=\frac{s^{-n}}{48n}\Big(\frac{s^6}{(n-6)(n-4)(n-2)}-\frac{3r^2s^4}{(n-4)(n-2)(n+2)}\\
       &\qquad\qquad\qquad+\frac{3r^4s^2}{(n-2)(n+2)(n+4)}-\frac{r^6}{(n+2)(n+4)(n+6)}\Big).
   \end{align*}
\end{remark}

\begin{proof}[Proof of Proposition~\ref{greens}] 
The existence of a unique solution $u\in\mathcal{Q}_{\lambda-1}^{2k}\left(\mathbb{R}_{\ge0},\mathbb{R}\right)$ to $(-\Delta)^k\left(u(r)\partial_r\right)=\omega(r)\,\partial_r$ for any $\omega\in\mathcal{Q}_{2k+\lambda-1}^0\left(\mathbb{R}_{\ge0},\mathbb{R}\right)$ is shown in \cite{bauer2024liouville}. We will establish the solution formula by induction on $k$. When $k=1$, the expression for $\varphi_k$ reads 
\[\varphi_1(r,s)=\frac{s^{-n}\,\Gamma\left(\frac{n}{2}\right)}{2\Gamma\left(\frac{n}{2}+1\right)}\,F \left(0,\frac{n}{2};\frac{n}{2}+1;\frac{r^2}{s^2}\right)=\frac{s^{-n}}{n}\]
with the last equality true since $\Gamma(x+1)=x\Gamma(x)$ and $F(0,b;c;z)=1$. The solution formula then becomes
\[u(r)=\frac{r^{1-n}}{n}\int_0^r s^{n}\omega(s)\,ds+\frac{r}{n}\int_r^\infty\omega(s)\,ds,\]
which one can directly check solves $\Delta\left(u(r)\partial_r\right)=-\omega(r)\,\partial_r$, noting that the vector Laplacian is given in Remark \ref{vectorlaplacian}. This establishes the base case. Now suppose that
\[u(r)=\int_0^\infty K_k(r,s)s^{n-1}\omega(s)\,ds\]
solves $(-\Delta)^k\left(u(r)\partial_r\right)=\omega(r)\,\partial_r$ for a fixed integer $k>1$.
Iterating, we see that 
\[u(r)=\int_0^\infty\left(\int_0^\infty K_k(r,\sigma)K_1(\sigma,s)\sigma^{n-1}\,d\sigma\right)s^{n-1}\omega(s)\,ds=\int_0^\infty K_{k+1}(r,s)s^{n-1}\omega(s)\,ds\]
solves $(-\Delta)^{k+1}\left(u(r)\partial_r\right)=\omega(r)\,\partial_r$, where 
\[K_{k+1}(r,s)=\delta_{k+1}(r,s)=rs\,\varphi_{k+1}(r,s)\quad\text{for}\quad r\le s\]
and 
\begin{equation}\label{induct}
\varphi_{k+1}(r,s)=\int_0^\infty \sigma^{n+1}\varphi_k(\min\{r,\sigma\},\max\{r,\sigma\})\varphi_1(\min\{\sigma,s\},\max\{\sigma,s\})\,d\sigma.
\end{equation}
We must show that this holds for $\varphi_k$ given by \eqref{varphik}, as long as $k+1<n/2+1$, i.e., $k<n/2$. Define the integral on the RHS to be $I$, so that
\begin{equation*}
    \begin{split}
        I&=\int_0^r\sigma^{n+1}\varphi_k(\sigma,r)\varphi_1(\sigma,s)\,d\sigma+\int_r^s \sigma^{n+1}\varphi_k(r,\sigma)\varphi_1(\sigma,s)\,d\sigma+\int_s^\infty \sigma^{n+1}\varphi_k(r,\sigma)\varphi_1(s,\sigma)\,d\sigma\\
       &=:I_1+I_2+I_3.
    \end{split}
\end{equation*}
We first compute $I_1$. Letting $a=1-k$, $b=n/2+1-k$, and $c=n/2+1$
yields
\[I_1=\int_0^r\sigma^{n+1}\varphi_k(\sigma,r)\varphi_1(\sigma,s)\,d\sigma
= \frac{s^{-n}}{n}\, \Constanta(k,n) r^{2k-2-n} \int_0^r \sigma^{n+1} F(a,b;c;\sigma^2/r^2) \, d\sigma.\]
Changing variables to $z=\sigma^2/r^2$, we get 
\[ I_1 = 
\frac{\Constanta(k,n)}{2n} \, s^{-n}r^{2k} \int_0^1 z^{c-1}\,F(a,b;c;z)\,dz.\]
We identify the integrand as the right side of \eqref{type1deriv}, and immediately obtain 
$$ I_1 = \frac{\Constanta(k,n)r^{2k}}{2cns^n} z^c F(a,b;c+1;z)\big|_{z=0}^{z=1} = 
\frac{\Constanta(k,n)}{2n(\tfrac{n}{2}+1)} \, \frac{r^{2k}}{s^n} \, F(a,b;c+1;1). $$

For $I_2$, change variables using $z=r^2/\sigma^2$ to find that
\begin{align*}
    I_2&=\int_r^s \sigma^{n+1}\varphi_k(r,\sigma)\varphi_1(\sigma,s)\,d\sigma = \frac{\Constanta(k,n)}{n} \, s^{-n} \int_r^s 
\sigma^{2k-1} F(a,b;c;r^2/\sigma^2) \, d\sigma \\
&= \frac{\Constanta(k,n)}{2n} \, \frac{r^{2k}}{s^n} \int_{r^2/s^2}^1 z^{a-2} F(a,b;c;z) \, dz. 
\end{align*}

Formula \eqref{type2deriv} now implies 
\begin{align*}
I_2 &= \frac{\Constanta(k,n)}{2n(a-1)} \, \frac{r^{2k}}{s^n} \, z^{a-1} F(a-1,b;c;z)\Big|_{z=r^2/s^2}^1 \\
&= -\frac{\Constanta(k,n)}{2nk} \, \Big( 
\frac{r^{2k}}{s^n} F(a-1,b;c;1) - s^{2k-n} F(a-1,b;c;r^2/s^2)\Big).
\end{align*}

Lastly, for $I_3$, we have
\[I_3=\int_s^\infty \sigma^{n+1}\varphi_k(r,\sigma)\varphi_1(s,\sigma)\,d\sigma= \frac{\Constanta(k,n)}{n} \int_s^{\infty} 
\sigma^{2k-1-n} F(a,b;c;r^2/\sigma^2) \, d\sigma.
\]
We change variables using $z=r^2/\sigma^2$ and apply \eqref{type3deriv} to obtain 
\begin{align*}
    I_3 &= \frac{\Constanta(k,n)}{2n} \, r^{2k-n} \int_0^{r^2/s^2} z^{b-2} F(a,b;c;z) \, dz = 
    \frac{\Constanta(k,n)}{2n(b-1)} \, r^{2k-n} z^{b-1} F(a,b-1;c;z) \Big|_{z=0}^{r^2/s^2} \\ 
    &= \frac{\Constanta(k,n)}{2n(\tfrac{n}{2}-k)} \, s^{2k-n} \, F(a,b-1; c, r^2/s^2).
\end{align*}
Note that in the last term we need to use the fact that $k<n/2$ so that $b-1>0$, in order to conclude that $z^{b-1} F(a,b+1;c;z)$ approaches zero as $z\to 0$.

Finally combining the three integrals gives 
\begin{multline*}
    I = I_1+I_2+I_3 = 
    \frac{\Constanta(k,n) \,r^{2k}}{2nc(a-1) s^n} \big[ (a-1) \, F(a,b;c+1;1) + c \, F(a-1,b;c;1)\big] \\ 
    + \frac{\Constanta(k,n)}{2n(a-1)(b-1)} \, s^{2k-n} \big[ (a-1)\, F(a,b-1;c;r^2/s^2) - (b-1) \, F( a-1, b; c; r^2/s^2)\big]
\end{multline*}

We can easily check that the first term disappears using Theorem \ref{gauss} and the fact that $a+c=b+1$. Meanwhile the second term reduces, by the contiguous relation of Corollary \ref{contiguouscorollary}, to    $$  I = \frac{\Constanta(k,n)(a-b)}{2n(a-1)(b-1)} \, s^{2k-n} \, F(a-1,b-1;c;r^2/s^2). $$
Since we can verify that
$$ \frac{\Constanta(k,n)(a-b)}{2n(a-1)(b-1)} = \Constanta(k+1,n),$$
this produces the formula 
$$ \varphi_{k+1}(r,s) 
= \Constanta(k+1,n) s^{2(k+1)-2-n} F(1-(k+1), \tfrac{n}{2}+1-(k+1); \tfrac{n}{2}+1; r^2/s^2),$$
which is the inductive step we wanted.
\end{proof}

\subsection{Proof of Breakdown of the EPDiff equation with homogeneous Sobolev inertia operator}\label{sec:blowup_hom}
To prove our main result, breakdown of the EPDiff equation with inertia operator $A=(-\Delta)^k$, we will employ the comparison based breakdown theorem of Section~\ref{generalbreakdown} and exploit the lower bound on hypergeometric functions from Section~\ref{sec:hypergeometric}.

\begin{proof}[Proof of Theorem~\ref{theorem:maintheorem}] 
The following proof will work for integers $k\ge1.$ For completeness, we include the case $k=0$ in the statement of the theorem; this case was already established in \cite{chae2012blow}.  We will now proceed with the remaining $k\ge1$. To apply Theorem~\ref{generalbreakdown} we need to choose a comparison function $Q$, which we will choose as described in~Remark \ref{canonicalQ}. 
 Using the expression for the auxiliary function $\varphi_k$ provided by Proposition \ref{greens} yields for $\varphi_k(0,r)$ the expression
 \[\varphi_k(0,r)=C(k,n)r^{2k-2-n},\]
 which converts the breakdown conditions \eqref{condition1} and \eqref{condition2} to
\begin{equation}\label{condition1Hk}
 \Psi_k(s,r):=(n+2-2k)\varphi_k(s,r) +r\frac{\partial\varphi_k}{\partial r}(s,r)\geq0\quad\text{for all}\quad s\in[0,r),
\end{equation}
\begin{equation}\label{condition2Hk}
     \tilde{\Psi}_k(r,s):=(n+2-2k) \varphi_k(r,s) + r\,\frac{\partial \varphi_k}{\partial r}(r,s)\geq\frac{C}{s^{n-2k+2}} \quad\text{for all}\quad s\in[r,\infty).
\end{equation} 
    We will establish each of the inequalities \eqref{condition1Hk}-\eqref{condition2Hk} in the following two cases. First recall the formula for $\varphi_k$, which we write in the form 
    $$ \varphi_k(r,s) = \Constanta(k,n) s^{-2b} F(a,b;c;r^2/s^2), \quad a=1-k, b=\tfrac{n}{2}+1-k, c=\tfrac{n}{2}+1,$$
    as in the proof of Proposition \ref{greens}.
\begin{enumerate}
\item We first show that $\Psi_k(s,r)\ge0$ for $s\in[0,r).$ 
\begin{align*} 
\Psi_k(s,r) &= \Constanta(k,n) \Big( 2b r^{-2b} F(a,b;c;s^2/r^2) + r \, \frac{\partial}{\partial r}\big[ r^{-2b} F(a,b;c;s^2/r^2)\big] \Big) \\
&= -2 \Constanta(k,n) r^{-2b-1} s^{2}\, \frac{d}{dz}\Big|_{z=s^2/r^2} F(a,b;c;z).
\end{align*}
By the inequality \eqref{decreasinghypergeom}, this is nonnegative since $z=s^2/r^2\in [0,1]$. Note that the condition $1\leq k< \frac{n}{2}+1$ ensures that 
$a\leq 0$ as required. 
\item Next we compute 
\begin{align*}
    s^{2b} \tilde{\Psi}_k(r,s) &= \Constanta(k,n) \Big[ 2b F(a,b;c;r^2/s^2) + r \, \frac{\partial}{\partial r} F(a,b;c;r^2/s^2) \Big] \\
    &= 2\Constanta(k,n) \Big[ 
    b F(a,b;c;z) + z \, \frac{d}{dz} F(a,b;c;z)\Big]\Big|_{z=r^2/s^2} \\
    &= 2\Constanta(k,n) z^{1-b}\Big|_{z=r^2/s^2} \, \frac{d}{dz}\Big|_{z=r^2/s^2} \Big( z^b F(a,b;c;z)\Big) \\
    &= 2b \Constanta(k,n) F(a,b+1;c;r^2/s^2), 
\end{align*}
using formula \eqref{type3deriv} with $b$ replaced by $(b+1)$. By inequality \eqref{hypergeombounds}, this is bounded below by a positive constant since $0\le r^2/s^2\le 1$, which is precisely condition \eqref{condition2Hk}. Here we use in addition that $b>0$, which is guaranteed since $k< \frac{n}{2}+1$. 
\end{enumerate}

By Remark \ref{canonicalQ}, and ultimately Theorem \ref{generalbreakdown}, this completes the proof.
\end{proof}

\subsection{Towards Conjecture~\ref{conjecture}: Blowup for non-homogenous and non-integer order inertia operators}\label{sec:future}
Here we describe how one should be able to extend the results of the current analysis for the operator $A=(-\Delta)^k$ for integer $k$ to the more general case $A=(\sigma - \Delta)^k$ for any real $k\in [0,n/2+1)$.
We first note that our current approach depends on an interpretation of the EPDiff equation as an ODE on an infinite-dimensional Banach space of functions. However, this interpretation is only valid for $k\geq \frac12$, and thus we expect that an entirely different method will be necessary to obtain the break-down for small $k<\frac12$; here we note that this is also the case for $k=0$, corresponding to Burgers' equation, for which our blowup proof would not be applicable. The primary challenge in achieving the result for 
$k\geq \frac12$, where the overall approach of this paper is expected to be applicable, lies in deriving a suitable formula for the Green function: while the equation presented in Proposition~\ref{greens} is formally well-defined for non-integer $k$, we do not know if this corresponds to the Green function of the fractional Laplacian. 
In addition, the situation for $\tfrac{1}{2}\le k<1$ will be somewhat different since the  current proof does not work, even if we had the correct Green function: the first condition in the blowup criterion is not true in that case, and a different comparison function will be needed. Additional difficulties arise when dealing with the non-homogeneous operator $A=(1-\Delta)^k$: while we do not believe that the lower-order terms should have an effect on the existence of initial conditions that admit solution breakdown, they significantly complicate the formula for the Green function; for this case it is not yet clear what the formula for the Green function would look like even for general positive integers $k$. It would also be interesting to study the critical case when $k=n/2+1$. Based on the results in one dimension~\cite{preston2018euler, bauer2020geodesic}, we would guess that global existence of smooth solutions is true in any dimension. Finally, these breakdown results are only presented on the domain $\mathbb{R}^n$ where the Green function is simpler and the equation admits radial solutions. This is used in an essential way since the negative momentum $\omega_0$ drives the flow map derivative $\gamma_r(t,0)$ to zero at the origin, based on an influence from arbitrarily large $r$. It is not clear whether the same breakdown results hold in general on the periodic domain $\mathbb{T}^n$, or on the ball in $\mathbb{R}^n$ with some boundary conditions.

\bibliographystyle{abbrv}
\bibliography{refs}

\end{document}